\definecolor{webgreen}{rgb}{0,.5,0}
\definecolor{webbrown}{rgb}{.6,0,0}
\newcounter{makeconstant}\newenvironment{makeconstant}
{\refstepcounter{makeconstant}}{}\def\mc#1{\begin{makeconstant}\label{#1}\end{makeconstant}}
\renewcommand{\le}{\leqslant}
\renewcommand{\ge}{\geqslant}
\renewcommand{\emptyset}{\varnothing}
\renewcommand{\epsilon}{\varepsilon}
\renewcommand{\subset}{\subseteq}
\newcommand{\seqnum}[1]{\href{http://www.research.att.com/cgi-bin/access.cgi/as/~njas/sequences/eisA.cgi?Anum=#1}{\underline{#1}}}
\newcommand{\divides}{|}
\newcommand{\notdivides}{\nmid}
\def\bigo{\operatorname{O}}    
\def\littleo{{\operatorname{o}}} 
\def\({\left(}
\def\){\right)}
\def\eul{{\rm{e}}}
\newcommand{\dirichlet}{\mathsf d}
\newcommand{\lcm}{\operatorname{lcm}}
\newcommand{\fix}{\operatorname{\mathcal F}}
\newcommand{\mertens}{\operatorname{\mathcal M}}
\newcommand{\orbit}{\operatorname{\mathcal O}}
\newcommand{\monoid}{\operatorname{\mathcal G}}
\begin{document}


\begin{center}
\vskip 1cm{\LARGE\rm
Functorial orbit counting}

\vskip 1cm
\large
Apisit Pakapongpun and Thomas Ward\\
School of Mathematics\\
University of East Anglia\\
Norwich NR6 5LB, UK\\
\end{center}

\vskip .2in

\begin{abstract}
We study the functorial and growth properties of closed orbits
for maps. By viewing an arbitrary sequence as the
orbit-counting function for a map, iterates and Cartesian
products of maps define new transformations between integer
sequences. An orbit monoid is associated to any integer
sequence, giving a dynamical interpretation of the Euler
transform.
\end{abstract}

\newtheorem{theorem}{Theorem}[section]
\newtheorem{proposition}{Proposition}[section]
\newtheorem{corollary}{Corollary}[section]
\newtheorem{lemma}{Lemma}[section]
\theoremstyle{definition}
\newtheorem{definition}{Definition}[section]
\newtheorem{example}{Example}[section]

\section{Introduction}

Many combinatorial or dynamical questions involve counting the number
of closed orbits or the periodic points under iteration of a map.
Here we consider functorial
properties of orbit-counting in the following sense. Associated
to a map~$T:X\to X$ with the property that~$T$ has only finitely many
orbits of each length are combinatorial data
(counts of fixed points and periodic orbits),
analytic data (a zeta function and a Dirichlet series) and
algebraic data (the orbit monoid). On the other hand, the collection
of such maps is closed under disjoint unions, direct products,
iteration, and other
operations. Our starting point is to ask how the associated data
behaves under those operations. A feature of this work is that
these natural operations applied to maps with simple orbit
structures give novel constructions of sequences with
combinatorial or arithmetic interest. Routine calculations are
suppressed here for brevity; complete details, related results,
and further applications will appear
in the thesis of the first author~\cite{apisit}.

We define the following categories:
{\sc maps} $\mathfrak{M}$, comprising all pairs~$(X,T)$ where~$T$
is a map~$X\to X$ with~$\fix_T(n)=\vert\{x\in X\mid
T^n(x)=x\}\vert<\infty$ for all~$n\ge1$;
{\sc orbits} $\mathfrak{O}=
\mathbb N_0^{\mathbb N}$, comprising all
sequences~$(a_n)_{n\ge1}$ with~$a_n\ge0$ for all~$n\ge1$; and
{\sc fixed points} $\mathfrak{F}\subset\mathfrak O$,
comprising any sequence~$a=(a_n)$
with the property that there is some~$(X,T)\in\mathfrak{M}$
with~$a_n=\fix_T(n)$ for all~$n\ge 1$.
For~$(X,T)\in\mathfrak M$, a closed orbit of length~$n$ under~$T$
is any set
of the form~$\tau=\{x,Tx,\dots,T^nx=x\}$ with
cardinality~$\vert\tau\vert=n$, and
we write~$\orbit_T(n)$ for the number of closed orbits of length~$n$.
Clearly
\begin{equation}\label{fixintermsoforbits}
\fix_T(n)=\sum_{d\divides n}d\orbit_T(d),
\end{equation}
so
\begin{equation}\label{orbitsintermsoffix}
\orbit_T(n)=\frac{1}{n}\sum_{d\divides
n}\mu(n/d)\fix_T(d),
\end{equation}
and this defines a bijection between~$\mathfrak F$
and~$\mathfrak O$ (see Everest, van der Poorten, Puri and the
second author~\cite{MR1938222},~\cite{MR1873399} for more on
the combinatorial applications of this bijection, and for
non-trivial examples of sequences in~$\mathfrak F$; see Baake
and Neum{\"a}rker~\cite{bn} for more on spectral
properties of the operators on~$\mathfrak O$). Since the
space~$X$ will not concern us, we will fix it to be some
countable set and refer to an element of~$\mathfrak M$ as a
map~$T$.

Recall from Knopfmacher~\cite{MR545904} that an \emph{additive
arithmetic semigroup} is a free abelian monoid~$G$ equipped
with a non-empty set of generators~$P$, and a weight
function
\[
\partial:G\to\mathbb N\cup\{0\}
\]
with~$\partial(a+b)=\partial(a)+\partial(b)$ for all~$a,b\in
G$, satisfying the finiteness property
    \[
    P(n)=\vert\{p\in P\mid\partial(p)=n\}\vert<\infty
    \]
for all~$n\ge1$. Given~$T\in\mathfrak M$, we define the orbit
monoid~$\monoid_T$ associated to~$T$ to be the free abelian
monoid generated by the closed orbits of~$T$, equipped with the
weight function
\[
\partial(a_1\tau_1+\cdots+a_r\tau_r)
=
a_1\vert\tau_1\vert+\cdots+a_r\vert\tau_r\vert,
\]
and write~$\monoid_T(n)$ for the number of elements of weight~$n$.
Finally, define {\sc orbit monoids}~$\mathfrak G$ to be the category of all
such monoids associated to maps in~$\mathfrak M$. Notice that every
additive arithmetic semigroup in the sense of Knopfmacher~\cite{MR545904}
is an orbit monoid,
since for any sequence~$(a_n)$ there is a
map~$T$ with~$\orbit_T(n)=a_n$ for all~$n\ge1$ (indeed,
Windsor~\cite{MR2422026} shows that
the map~$T$ may be chosen to be a~$C^{\infty}$ diffeomorphism of a torus).
We will write~$\monoid_T$
for the sequence~$\(\monoid_T(n)\)$, since the sequence
determines the monoid up to isomorphism.

As usual, we write~$\zeta=(1,1,1,\dots)$ and~$\mu=(1,-1,-1,0,\dots)$
for the zeta and
M{\"o}bius functions viewed as sequences, and use the same symbols
to denote their
Dirichlet series.

There are natural generating functions associated to an
element~$T\in\mathfrak M$.
If~$\fix_T(n)$ is exponentially bounded then the
\emph{dynamical zeta function}
\[
\zeta_T(s)=\exp\sum_{n\ge1}\frac{s^n}{n}\fix_T(n)
\]
converges in some complex disk (see Artin and Mazur~\cite{am}).
If~$\orbit_T(n)$ is polynomially bounded then the \emph{orbit
Dirichlet series}
\[
\dirichlet_T(s)=\sum_{n\ge1}\frac{\orbit_T(n)}{n^s},
\]
converges in some half-plane. The basic
relation~\eqref{fixintermsoforbits} is expressed in terms of
these generating functions by the two identities
\[
\zeta_T(s)=\prod_{n\ge1}\(1-s\)^{-\orbit_T(n)}
=\prod_{\tau}\(1-s^{\vert\tau\vert}\)^{-1},
\]
where the product is taken over all closed orbits of~$T$, and
\[
\dirichlet_T(s)\zeta(s+1)=\sum_{n\ge1}\frac{\fix_T(n)}{n^{s+1}}.
\]

Finally, a natural measure of the rate of growth in~$\orbit_T$ is
the number
\[
\pi_T(N)=\vert\{\tau\mid\vert\tau\vert\le N\}\vert
\]
of closed orbits of length no more than~$N$. Asymptotics
for~$\pi_T$ are analogous to the prime number theorem.
In the case of exponential
growth, that is under the assumption that~$\limsup_{n\to\infty}\frac{1}{n}
\log\orbit_T(n)=h>0$, a more smoothly averaged measure
of orbit growth is given by
\[
\mertens_T(N)=\sum_{\vert\tau\vert\le N}\frac{1}{\eul^{h\vert\tau\vert}},
\]
and asymptotics for~$\mertens_T$ are analogous to Mertens' theorem.

\section{Functorial properties}

Most functorial properties are immediate, so we simply record them
here. Write~$T_1\times T_2$ for the Cartesian product of two
maps,~$T_1\sqcup T_2$ for the disjoint union,
defined by
\begin{equation*}
(T_1\sqcup T_2)(x)=\begin{cases}T_1(x)&\mbox{if }x\in X_1,\\
T_2(x)&\mbox{if }x\in X_2,
\end{cases}
\end{equation*}
and write~$T^k$ with~$k\ge1$ for the~$k$th iterate of~$T$.
Then
\begin{enumerate}
\item $\fix_{T_1\times T_2}=\fix_{T_1}\fix_{T_2}$ (pointwise product);
\item $\dirichlet_{T_1\sqcup T_2}=\dirichlet_{T_1}+\dirichlet_{T_2}$;
\item $\zeta_{T_1\sqcup T_2}=\zeta_{T_1}\zeta_{T_2}$;
\item $\fix_{T^k}(n)=\fix_{T}(kn)$ for all~$k\ge1$ and~$n\ge1$.
\end{enumerate}

In contrast to the first of these, it is clear that
computing the
number of closed orbits under the Cartesian product of two maps
is more involved.

\begin{lemma}\label{I think as I please}
$\orbit_{T_1\times T_2}(n)=
\displaystyle\sum_{\genfrac{}{}{0pt}{}{d_1,d_2\in\mathbb N,}{\lcm(d_1,d_2)=n}}
\orbit_{T_1}(d_1)\orbit_{T_2}(d_2)\gcd(d_1,d_2).$
\end{lemma}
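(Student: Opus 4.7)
The plan is to count orbits of $T_1\times T_2$ of length $n$ by organising them according to which pair of orbits of the factors they project onto. Concretely, if $\tau_1$ is a closed orbit of $T_1$ of length $d_1$ and $\tau_2$ is a closed orbit of $T_2$ of length $d_2$, then $\tau_1\times\tau_2$ is a $(T_1\times T_2)$-invariant subset of $X_1\times X_2$ of cardinality $d_1 d_2$, and every closed orbit of $T_1\times T_2$ is contained in exactly one such product set (taking $\tau_i$ to be the projection of the orbit to $X_i$, which is itself a closed orbit).

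First I would analyse the restriction of $T_1\times T_2$ to a single block $\tau_1\times\tau_2$. For any $(x_1,x_2)\in\tau_1\times\tau_2$ and any $k\ge1$, one has $(T_1\times T_2)^k(x_1,x_2)=(x_1,x_2)$ if and only if $d_1\mid k$ and $d_2\mid k$, which is equivalent to $\lcm(d_1,d_2)\mid k$. Hence every point of $\tau_1\times\tau_2$ lies on a closed orbit of length exactly $\lcm(d_1,d_2)$, and the block partitions into
\[
\frac{d_1d_2}{\lcm(d_1,d_2)}=\gcd(d_1,d_2)
\]
closed orbits, each of length $\lcm(d_1,d_2)$.

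Next I would assemble the count. Summing over all ordered pairs $(\tau_1,\tau_2)$ whose lengths $d_1,d_2$ satisfy $\lcm(d_1,d_2)=n$, each such pair contributes $\gcd(d_1,d_2)$ closed orbits of length $n$ to $T_1\times T_2$, and the number of pairs with prescribed lengths $(d_1,d_2)$ is $\orbit_{T_1}(d_1)\orbit_{T_2}(d_2)$. Since distinct pairs $(\tau_1,\tau_2)$ produce disjoint product blocks and hence disjoint orbits, the counts add to give the claimed formula.

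The main obstacle, minor as it is, lies in the first step: verifying carefully that the block $\tau_1\times\tau_2$ really does decompose into orbits all of the same length $\lcm(d_1,d_2)$ rather than a mixture of lengths. This reduces to the elementary fact about the cyclic group generated by $(T_1\times T_2)\restriction_{\tau_1\times\tau_2}$, but it is the step that converts the naive product $d_1 d_2$ into the factor $\gcd(d_1,d_2)$ and so deserves explicit justification.
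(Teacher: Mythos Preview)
Your proposal is correct and follows essentially the same approach as the paper: both arguments partition the product space into blocks $\tau_1\times\tau_2$, observe that each block decomposes into $\gcd(d_1,d_2)$ closed orbits of common length $\lcm(d_1,d_2)$, and then sum over pairs with $\lcm(d_1,d_2)=n$. If anything, your version is slightly more explicit in justifying why the orbit length through any point of $\tau_1\times\tau_2$ is exactly $\lcm(d_1,d_2)$, via the condition $d_1\mid k$ and $d_2\mid k$.
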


\begin{proof}
If~$(x_1,x_2)$ lies on a~$T_1\times T_2$-orbit of length~$n$,
then, in particular,
\[
T_1^n(x_1)=x_1
\]
and
\[
T_2^n(x_2)=x_2,
\]
so~$x_i$ lies on a~$T_i$-orbit of length~$d_i$ for some~$d_i$
dividing~$n$, for~$i=1,2$. On the other hand, if~$x_i$ lies on
a~$T_i$-orbit of length~$d_i$ for~$i=1,2$ then the~$T_1\times
T_2$-orbit of~$(x_1,x_2)$ has cardinality~$\lcm(d_1,d_2)$. On
the other hand, if~$\tau_1,\tau_2$ are orbits of
length~$d_1,d_2$ with~$\lcm(d_1,d_2)=n$, then there
are~$d_1d_2$ points in the set~$\tau_1\times\tau_2$, so this
must split up into~$d_1d_2/n=\gcd(d_1,d_2)$ orbits of
length~$n$ under~$T_1\times T_2$.
\end{proof}

\begin{example}\label{exampleof11cross11andcyclicgroups}
Let~$T$ be a map with one orbit of each length, so~$\dirichlet_T(s)=\zeta(s)$.
Then, by Lemma~\ref{I think as I please} and a calculation,
\begin{equation*}
\orbit_{T\times T}(n)=\sum_{\genfrac{}{}{0pt}{}{d_1,d_2\in\mathbb N,}{\lcm(d_1,d_2)=n}}
\gcd(d_1,d_2)
=\sum_{d\divides n}\sigma(d)\mu(n/d)^2,
\end{equation*}
so
\begin{equation}\label{equation:ttimestseries}
\dirichlet_{T\times T}(s)=\frac{\zeta(s)^2\zeta(s-1)}{\zeta(2s)}=
1+\frac{4}{2^s}+\frac{5}{3^s}+\frac{10}{4^s}+\frac{7}{5^s}+
\frac{20}{6^s}+\frac{9}{7^s}+\frac{22}{8^s}+\cdots.
\end{equation}
By identifying an orbit of length~$n$ under~$T$ with a cyclic
group~$C_n$ of order~$n$, we see from the proof of Lemma~\ref{I
think as I please} that~$\orbit_{T\times T}(n)$ is the number
of cyclic subgroups of~$C_n\times C_n$, so~$\orbit_{T\times T}$ is
\seqnum{A060648}.
\end{example}

Example~\ref{exampleof11cross11andcyclicgroups} is generalized
in Example~\ref{examplewithgeneralP}, where it corresponds to the
case~$P=\emptyset$.

\begin{example}
Let~$p$ be a prime, and assume
that~$\orbit_{T}(n)=p^n$ for all~$n\ge 1$, so~$\zeta_T(s)=\frac{1}{1-ps}$.
Then
\[
\orbit_{T\times T}(n)=
\frac{1}{n}\sum_{d\divides n}\(\mu(n/d)\sum_{d_1\divides d}d_1p^{d_1}
\sum_{d_2\divides d}d_2p^{d_2}\)=\frac{1}{n}\sum_{d\divides n}\mu(d)p^{2n/d},
\]
which is the number of irreducible polynomials of degree~$n+1$ over~$\mathbb F_{p^2}$.
In the case~$p=2$ this gives the sequence~\seqnum{A027377}, and in the
case~$p=3$ this gives~\seqnum{A027381}.
\end{example}

The behavior of orbits under iteration is more involved.
To motivate the rather dense formula below, consider the orbits
of length~$n$ under~$T^p$ for some prime~$p$. Points on an~$m$-orbit
under~$T$ lie on an orbit of length~$m/\gcd(m,p)$ under~$T^p$.
If~$m\neq n,np$ then~$m/\gcd(m,p)\neq n$, so the only points that
can contribute to~$\orbit_{T^p}(n)$ are points lying on~$n$-orbits
or on~$np$-orbits under~$T$. Each~$np$-orbit under~$T$ splits
into~$p$ orbits of length~$n$ under~$T^p$. An~$n$-orbit under~$T$ defines
an~$n$-orbit under~$T^p$ only if~$p\notdivides n$. It follows that
\begin{equation}\label{equationpowersprimeonly}
\orbit_{T^p}(n)=
\begin{cases}p\orbit_{T}(pn)+\orbit_T(n)&\mbox{if }p\notdivides n;\\
p\orbit_{T}(pn)&\mbox{if }p\divides n.
\end{cases}
\end{equation}
In order to state the general case, fix the power~$m$ and
write~$m=\boldsymbol p^{\boldsymbol a}=p_1^{a_1}\cdots p_r^{a_r}$ for the
decomposition into primes of~$m\in\mathbb N$;
for any set~$J\subset I=\{p_1,\dots,p_r\}$
write~$\boldsymbol p_J^{\boldsymbol a_J}=\prod_{p_j\in J}p_j^{a_j}$.
Finally, write~$\mathcal{D}(n)$ for the set of prime divisors of~$n$.

\begin{theorem}\label{theorem:orbitsforiteration}
Let~$m=\boldsymbol p^{\boldsymbol a}$
and~$J=J(n)=\mathcal{D}(m)\setminus\mathcal{D}(n)$. Then
\begin{equation}\label{equationpowersA}
\orbit_{T^m}(n)=\sum_{d\divides\boldsymbol p_J^{\boldsymbol a_J}}
\textstyle\frac{m}{d}\orbit_{T}(\frac{mn}{d}).
\end{equation}
\end{theorem}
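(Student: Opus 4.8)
The plan is to count, for each $\ell$, how many length-$n$ orbits of $T^m$ arise from the length-$\ell$ orbits of $T$, and then to rewrite the resulting divisor sum into the stated form by examining it one prime at a time.

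First I would isolate the elementary local fact: on a $T$-orbit of length $\ell$ the map $T^m$ acts by rotation by $m$, hence splits that orbit into $\gcd(\ell,m)$ orbits of $T^m$, each of length $\ell/\gcd(\ell,m)$. So the length-$\ell$ orbits of $T$ contribute to $\orbit_{T^m}(n)$ precisely when $\ell/\gcd(\ell,m)=n$, and then each one contributes $\gcd(\ell,m)=\ell/n$ orbits. Since this condition forces $\ell\le mn$, only finitely many $\ell$ intervene, and summing over them gives
\[
\orbit_{T^m}(n)=\sum_{\substack{\ell\ge1\\ \ell/\gcd(\ell,m)=n}}\frac{\ell}{n}\,\orbit_T(\ell)
=\sum_{\substack{g\divides m\\ \gcd(ng,m)=g}}g\,\orbit_T(ng),
\]
where in the second sum I have substituted $\ell=ng$, so that $g=\gcd(\ell,m)\divides m$ and the index condition becomes $\gcd(ng,m)=g$.

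Next I would identify exactly which $g$ occur, using the $p$-adic valuation $v_p$. For a prime $p_i\divides m$ set $b_i=v_{p_i}(g)\in\{0,\dots,a_i\}$; the condition $\gcd(ng,m)=g$ reads, prime by prime, $\min\bigl(v_{p_i}(n)+b_i,\,a_i\bigr)=b_i$, which is automatic for every $b_i\le a_i$ when $p_i\notdivides n$ and forces $b_i=a_i$ when $p_i\divides n$. Hence the admissible $g$ are exactly those of the form $g=(m/\boldsymbol p_J^{\boldsymbol a_J})\,e$ with $e\divides\boldsymbol p_J^{\boldsymbol a_J}$; equivalently, as $d$ runs over the divisors of $\boldsymbol p_J^{\boldsymbol a_J}$, the quantity $g=m/d$ runs bijectively over the admissible values. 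Substituting $d=m/g$ (so $g=m/d$ and $ng=mn/d$) turns the last display into $\orbit_{T^m}(n)=\sum_{d\divides\boldsymbol p_J^{\boldsymbol a_J}}\frac{m}{d}\,\orbit_T(\frac{mn}{d})$, which is~\eqref{equationpowersA}.

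The only step needing care is this prime-by-prime accounting: checking that the correspondence $g\leftrightarrow d=m/g$ really is a bijection between admissible $g\divides m$ and divisors of $\boldsymbol p_J^{\boldsymbol a_J}$, and that the multiplicity $\gcd(\ell,m)$ transforms into exactly $m/d$. Everything else is bookkeeping; specializing to $m=p$ prime recovers~\eqref{equationpowersprimeonly}, which serves as a useful consistency check.
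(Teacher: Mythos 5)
Your argument is correct, and it takes a genuinely different route from the paper. The paper first establishes the prime case $m=p$ directly (its~\eqref{equationpowersprimeonly}) and then proceeds by induction on $\sum a_i$, multiplying $m$ by a new prime $q$ and splitting into four cases according to whether $q$ lies in $\mathcal D(m)$ and/or $\mathcal D(n)$. You instead go straight to a bijective count: a $T$-orbit of length $\ell$ splits under $T^m$ into $\gcd(\ell,m)$ orbits of length $\ell/\gcd(\ell,m)$, and then you classify the admissible $\ell=ng$ by analyzing the condition $\gcd(ng,m)=g$ one prime at a time, identifying the admissible $g$ with the divisors $d=m/g$ of $\boldsymbol p_J^{\boldsymbol a_J}$. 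The prime-by-prime valuation check is sound: for $p_i\in J$ every exponent $0\le b_i\le a_i$ is allowed, while for $p_i\in\mathcal D(m)\cap\mathcal D(n)$ the condition forces $b_i=a_i$; this gives exactly the claimed bijection. Your approach is shorter and explains \emph{why} the set $J$ appears, whereas the paper's induction verifies the formula somewhat opaquely case by case; the trade-off is that the paper's structure makes the base case~\eqref{equationpowersprimeonly} the only place where genuine orbit-counting happens, while you have to justify the elementary splitting lemma (which you do) as part of the argument. Either proof is complete; yours is arguably the more transparent.
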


\begin{proof}
Notice that~$J$ depends on~$n$, so the formula~\eqref{equationpowersA}
involves a splitting into cases depending on the primes dividing~$n$, just as
in the case of a single prime discussed above.
We argue by induction on the length~$\sum_{i=1}^{r}a_i$ of~$m$.
If the length of~$m$ is~$1$, then~$m$ is a prime and~\eqref{equationpowersA}
reduces to~\eqref{equationpowersprimeonly}. Assume now that~\eqref{equationpowersA}
holds for~$\sum_{i=1}^{r}a_i\le k$, and let~$m$ have length~$k$;
write~$I=\mathcal{D}(m)$ and~$J=\mathcal{D}(m)\setminus\mathcal{D}(n)$.
We consider the effect of multiplying~$m$ by a prime~$q$ on the
formula~\eqref{equationpowersA},
and write~$I'=\mathcal{D}(mq)$,~$J'=\mathcal{D}(mq)\setminus\mathcal{D}(n)$.

If~$q\in\mathcal{D}(n)\setminus I$ then by~\eqref{equationpowersprimeonly} we have
\begin{equation*}
\orbit_{T^{mq}}(n)=q\orbit_{T^m}(qn)=
q\sum_{d\divides\boldsymbol p_J^{\boldsymbol a_J}}
\textstyle\frac{m}{d}\orbit_{T}(\frac{mnq}{d})=
q\displaystyle\sum_{d\divides\boldsymbol p_{J'}^{\boldsymbol a_{J'}}}
\textstyle\frac{m}{d}\orbit_T(\frac{mnq}{d}),
\end{equation*}
in accordance with~\eqref{equationpowersA}.

If~$q\in I\cap\mathcal{D}(n)$, then~$I'=I$ and~$J'=J$, so
\begin{equation*}
\orbit_{T^{mq}}(n)=q\orbit_{T^m}(qn)=
q\sum_{d\divides\boldsymbol p_{J'}^{\boldsymbol a_{J'}}}\textstyle\frac{m}{d}\orbit_{T}(\frac{mnq}{d})
\end{equation*}
as required.

If~$q\notin I\cup\mathcal{D}(n)$ then
\begin{eqnarray*}
\orbit_{T^{mq}}(n)&=&q\orbit_{T^{m}}(qn)+\orbit_{T^m}(n)\\&=&
q\sum_{d\divides\boldsymbol p_J^{\boldsymbol a_J}}
\textstyle\frac{m}{d}\orbit_{T}(\frac{mqn}{d})+
\displaystyle\sum_{d\divides\boldsymbol p_J^{\boldsymbol a_J}}\textstyle\frac{m}{d}\orbit_{T}(\frac{mn}{d})\\
&=&
\displaystyle\sum_{d\divides\boldsymbol p_J^{\boldsymbol a_J}q}\textstyle\frac{qm}{d}\orbit_{T}(\frac{mqn}{d})
\end{eqnarray*}
as required.

Finally, if~$q\in I\setminus\mathcal{D}(n)$ then
\begin{eqnarray*}
\orbit_{T^{mq}}(n)&=&q\orbit_{T^{m}}(qn)+\orbit_{T^m}(n)\\
&=&
q\displaystyle
\sum_{d\divides\boldsymbol p_{J\setminus\{q\}}^{\boldsymbol a_{J\setminus\{q\}}}}
\textstyle\frac{m}{d}\orbit_{T}(\textstyle\frac{mqn}{d})+\displaystyle
\sum_{d\divides\boldsymbol p_J^{\boldsymbol a_J}}
\textstyle\frac{m}{d}\orbit_{T}(\frac{mn}{d})\\
&=&
\displaystyle\sum_{\genfrac{}{}{0pt}{}{d\divides\boldsymbol p_J^{\boldsymbol a_J}q,}{q\notdivides d}}
\textstyle\frac{mq}{d}\orbit_{T}\textstyle\frac{mqn}{d}+
\displaystyle\sum_{\genfrac{}{}{0pt}{}{d\divides\boldsymbol p_J^{\boldsymbol a_J}q,}{q\divides d}}
\textstyle\frac{m}{d/q}\orbit_{T}(\textstyle\frac{mn/d}{d/q})\\
&=&
\displaystyle\sum_{d\divides\boldsymbol p_J^{\boldsymbol a_J}q}\frac{qm}{d}\orbit_{T}(\textstyle\frac{mqn}{d}),
\end{eqnarray*}
completing the proof.
\end{proof}

This defines a family of transformations on sequences,
taking~$\orbit_T$ to~$\orbit_{T^k}$ for each~$k\ge1$.

\begin{example}
Let~$T\in\mathfrak M$ have~$\orbit_T(n)=n$ for all~$n\ge 1$, so~$\dirichlet_T(s)=\zeta(s-1)$.
Then by~\eqref{orbitsintermsoffix} we have
\[
\orbit_{T^2}(n)=\frac{1}{n}\sum_{d\divides n}\mu\(\frac{n}{d}\)\fix_{T^2}(d)=
\frac{1}{n}\sum_{d\divides n}\mu\(\frac{n}{d}\)\sigma_2(2d)
\]
since~$\fix_{T^2}(d)=\fix_T(2d)=\sum_{e\divides
2d}e\orbit_T(e)=\sum_{e\divides 2d}e^2$, so
\[
\dirichlet_{T^2}(s)=\(5-\frac{2}{2^s}\)\zeta(s-1)=
5+\frac{8}{2^s}+\frac{15}{3^s}+\frac{16}{4^s}+\frac{25}{5^s}+
\frac{24}{6^s}+\frac{35}{7^s}+\frac{32}{8^s}+\cdots.
\]
Thus~$\orbit_{T^2}$ is~\seqnum{A091574} (up to an offset).
\end{example}

\begin{example}
More generally, if~$\dirichlet_T(s)=\zeta(s-1)$ and~$p$ is a prime, then
\[
\orbit_{T^p}(n)=\frac{1}{n}\sum_{d\divides n}\mu\(\frac{n}{d}\)\sigma_2(pd)=
\begin{cases}(p^2+1)n&\mbox{if }p\notdivides n;\\
p^2n&\mbox{if }p\divides n\end{cases}
\]
so~$\dirichlet_{T^p}(s)=\(p^2+1-\displaystyle\frac{p}{p^s}\)\zeta(s-1)$.
Composite powers are more involved; full details are
in~\cite{apisit}. For example,
\[
\orbit_{T^4}(n)=\begin{cases}16n&\mbox{if $n$ is even};\\
21n&\mbox{if $n$ is odd}
\end{cases}
\]
so~$\dirichlet_{T^4}(s)=\(2-\frac{10}{2^s}\)\zeta(s-1).$
\end{example}

An important family of dynamical systems -- those of
\emph{finite combinatorial rank} -- have
been studied by Everest, Miles, Stevens and Ward~\cite{emsw}. These have the
property that their orbit Dirichlet series is ``Dirichlet--rational'',
that is there is
a finite set~$C\subset\mathbb Z$ with the
property that~$\dirichlet_T(s)$ is a rational
function in the variables~$\{c^{-s}\mid c\in C\}$.
An easy consequence of Theorem~\ref{theorem:orbitsforiteration} is that
this property is preserved under iteration.

\begin{corollary}
If maps~$S$ and~$T$ have Dirichlet--rational orbit Dirichlet
series, then so do~$S\times T$ and~$T^k$ for any~$k\ge1$.
\end{corollary}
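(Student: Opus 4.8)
The plan is to reduce the claim about $S \times T$ and $T^k$ to the Dirichlet--rationality of $\dirichlet_S$ and $\dirichlet_T$ by unwinding Lemma~\ref{I think as I please} and Theorem~\ref{theorem:orbitsforiteration} at the level of Dirichlet series. First I would record the elementary but useful translation between the orbit Dirichlet series $\dirichlet_T(s)=\sum_n \orbit_T(n)n^{-s}$ and the ``fixed-point Dirichlet series'' $\sum_n \fix_T(n)n^{-s}$: by~\eqref{fixintermsoforbits} the latter equals $\dirichlet_T(s)\zeta(s-1)$, and conversely $\dirichlet_T(s) = \zeta(s-1)^{-1}\sum_n \fix_T(n)n^{-s} = \frac{1}{\zeta(s-1)}\sum_n \fix_T(n)n^{-s}$. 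Since $\zeta(s-1)$ and $\zeta(s-1)^{-1} = \sum_n \mu(n)n\cdot n^{-s}$ are both Dirichlet--rational (in the single variable, trivially, or more precisely their product relations are polynomial), $\dirichlet_T$ is Dirichlet--rational with constant set $C$ if and only if the fixed-point series is Dirichlet--rational with constant set $C \cup \{1\}$ or so; the point is that passing back and forth between $\orbit$ and $\fix$ preserves the property, enlarging $C$ only by harmless factors.

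For the product $S \times T$, I would observe that Lemma~\ref{I think as I please} is a multiplicative-type convolution in the $\lcm$/$\gcd$ sense, which is precisely the kind of operation that corresponds to a clean operation on Dirichlet series when one first clears the $\gcd$ factor. Concretely, writing $\orbit_{S\times T}(n)$ via the lemma and using $\gcd(d_1,d_2)\lcm(d_1,d_2) = d_1 d_2$, one gets that $\sum_n \orbit_{S\times T}(n) n^{-s}$ can be expressed through the Dirichlet series of $d\orbit_S(d)$ and $d\orbit_T(d)$ together with an Euler-product correction over primes; alternatively — and this is the slicker route — use $\fix_{T_1\times T_2} = \fix_{T_1}\fix_{T_2}$ pointwise. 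Then the fixed-point Dirichlet series of $S\times T$ is $\sum_n \fix_S(n)\fix_T(n) n^{-s}$, the \emph{Dirichlet product} (pointwise product of coefficients), and for sequences that are rational functions of $\{c^{-s} : c \in C\}$ this pointwise product is again such a rational function over the enlarged constant set consisting of all products $c_1 c_2$ with $c_i \in C_S \cup C_T$ (this is a standard fact: the set of Dirichlet--rational series over a fixed finite multiplicatively-generated constant set is closed under Dirichlet product). Translating back to $\orbit_{S\times T}$ via the first paragraph finishes this case.

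For iteration, the relation $\fix_{T^k}(n) = \fix_T(kn)$ from item (4) does the work. If the fixed-point Dirichlet series $F_T(s) = \sum_n \fix_T(n) n^{-s}$ is Dirichlet--rational over $C$, then $\sum_n \fix_{T^k}(n) n^{-s} = \sum_n \fix_T(kn) n^{-s}$, which is obtained from $F_T$ by the ``sieve by residue'' operation: extract the subseries indexed by multiples of $k$ (a rational operation, using roots of unity or equivalently intersecting with the sub-monoid of integers divisible by $k$), then rescale $n \mapsto n/k$, which multiplies by $k^{s}$ — a factor rational in the variables over $C \cup \{k\}$. Hence $F_{T^k}$ is Dirichlet--rational over $C \cup \{k\} \cup (\text{prime factors of } k)$, and converting back to $\dirichlet_{T^k}$ via $\dirichlet_{T^k}(s) = F_{T^k}(s)/\zeta(s-1)$ preserves this. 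The main obstacle I expect is the bookkeeping in the ``extract multiples of $k$'' step: one must verify that intersecting a Dirichlet--rational series with an arithmetic progression $n \equiv 0 \pmod k$ stays Dirichlet--rational over a controlled constant set, which is cleanest when $C$ is taken closed under the relevant multiplicative operations — so I would state the invariance in terms of the multiplicative monoid generated by $C$ and the primes involved, prove closure under (i) pointwise product and (ii) the $n \mapsto kn$ dilation once and for all, and then the corollary is immediate. I would note that full details appear in~\cite{apisit} and keep the writeup short.
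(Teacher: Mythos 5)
There is a genuine gap at the very first step of your reduction. You assert that $\zeta(s-1)$ (and its reciprocal) are ``Dirichlet--rational,'' and hence that the fixed-point Dirichlet series $\sum_n\fix_T(n)n^{-s}$ is Dirichlet--rational whenever $\dirichlet_T$ is. This is false: a rational function in finitely many variables $\{c^{-s}\mid c\in C\}$ produces a Dirichlet series whose coefficients are supported on the multiplicative monoid generated by $C$, a \emph{finitely generated} submonoid of~$\mathbb N$. But $\zeta(s)=\sum_n n^{-s}$ has support all of~$\mathbb N$, so it is not a rational function in any finite set $\{c^{-s}\}$. Consequently $\sum_n\fix_T(n)n^{-s}=\zeta(s)\,\dirichlet_T(s-1)$ (note also you have the shift on the wrong factor: compare the paper's identity $\dirichlet_T(s)\zeta(s+1)=\sum_n\fix_T(n)n^{-s-1}$) is generally \emph{not} Dirichlet--rational even when $\dirichlet_T$ is, and your entire translation to the $\fix$ side collapses. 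You would need instead to restrict to $n$ smooth over the finite set $Q$ of support primes of $\orbit_S$ and $\orbit_T$, where the troublesome factor $\zeta$ is replaced by the finite Euler product $\prod_{q\in Q}(1-q^{-s})^{-1}$, which \emph{is} Dirichlet--rational; this repair is available but you do not make it. A second, independent concern is your appeal to a ``standard fact'' that Dirichlet--rational series are closed under pointwise (Hadamard) product: this is classical and true for one-variable rational power series, but Hadamard products of multivariate rational power series need not be rational, so the claim requires a proof specific to this setting rather than a citation to folklore.

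The paper's route is different and stays entirely at the level of $\orbit$: the corollary is stated to follow from Theorem~\ref{theorem:orbitsforiteration}, which expresses $\orbit_{T^m}(n)$ as a finite sum $\sum_{d\mid\boldsymbol p_J^{\boldsymbol a_J}}\frac{m}{d}\orbit_T(\frac{mn}{d})$ with a case split on which primes of $m$ divide $n$; combined with Lemma~\ref{I think as I please} for $\orbit_{S\times T}$, one gets a finite, piecewise-uniform recipe relating the new orbit count to the old ones. Because $\orbit_T$ is supported on a finitely generated monoid, each piece corresponds to a monomial substitution and restriction in the variables $c^{-s}$, which visibly preserves rationality. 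Your fixed-point detour, even where it can be made to work, obscures this structure and trades the explicit orbit formulas (which the paper proved precisely for this purpose) for an unproved Hadamard-product lemma.
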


\begin{example}\label{feigenbaumquadraticmap}
The quadratic map~$T:x\mapsto1-cx^2$ on the
interval~$[-1,1]$ at the Feigenbaum
value~$c=1.401155\cdots$ (see
Feigenbaum's lecture
notes~\cite{MR1209847}; this is at the end of a period-doubling
cascade) gives a particularly
simple example of a Dirichlet--rational Dirichlet series.
This map has
\[
\orbit_{T}(n)=\begin{cases}1&\mbox{if }n=2^k
\mbox{ for some }k\ge0;\\
0&\mbox{if not},
\end{cases}
\]
so~$\dirichlet_T(s) =\frac{1}{1-2^{-s}}$
and~$\orbit_T$ is (up to an offset) the Fredholm-Rueppel sequence~\seqnum{A036987}.
By~\eqref{fixintermsoforbits} we have~$\fix_T(n)=2\lfloor n
\rfloor_2-1$, where~$\lfloor n\rfloor_2=\vert n\vert_2^{-1}$ denotes the~$2$-part of~$n$, so~$\fix_T$
is~\seqnum{A038712}. Using this we see that
\[
\orbit_{T^2}(n)=\frac{1}{n}\sum_{d\divides n}\mu\(\frac{n}{d}\)
\(2\lfloor 2d\rfloor_2-1\)=
\begin{cases}
3&\mbox{if }n=1;\\
2&\mbox{if }n=2^k
\mbox{ for some }k\ge1;\\
0&\mbox{if not},
\end{cases}
\]
so
\[
\dirichlet_{T^2}(s)=\frac{3-2^{-s}}{1-2^{-s}}.
\]
More generally, the formula for~$\fix_T$ shows that
\[
\dirichlet_{T^k}(s)=\lfloor k\rfloor_2-1+\lfloor k\rfloor_2\dirichlet_{T}(s)
\]
for any~$k\ge1$.
\end{example}

\begin{example}
With~$T$ as in Example~\ref{feigenbaumquadraticmap}, a similar
calculation using Lemma~\ref{I think as I please} shows that
\[
\dirichlet_{T\times
T}(s)=\frac{3}{1-2^{-(s-1)}}-\frac{2}{1-2^{-s}}.
\]
If~$S\in\mathfrak M$ has
\[
\orbit_{S}(n)=\begin{cases}1&\mbox{if }n=3^k
\mbox{ for some }k\ge0;\\
0&\mbox{if not},
\end{cases}
\]
then~$O_{T\times S}$ is \seqnum{A065333}, the characteristic
function of the~$3$-smooth numbers, so
\[
\dirichlet_{T\times S}(s)=\frac{1}{(1-2^{-s})(1-3^{-s})},
\]
\[
\dirichlet_{T\times T\times S}(s)
=
\frac{3}{(1-2^{-(s-1)})(1-3^{-s})}-\frac{2}{(1-2^{-s})(1-3^{-s})},
\]
and so on.
\end{example}

The thesis of the first author~\cite{apisit} characterizes the
existence of ``roots'': that is, given a
sequence~$a\in\mathfrak O$ and~$k\ge1$ to determine if there is
some~$T\in\mathfrak M$ with~$\orbit_{T^k}=a$. Instances of no
roots, unique roots, and uncountably many roots occur.

\section{Multiplicative sequences}

Multiplicative sequences in~$\mathfrak O$ are particularly easy
to work with, and in this section we describe simple examples
of such sequences, and some properties of their product
systems. In particular, we show how simple orbit sequences may
factorize (that is, be the orbit sequence of the product of two
maps) in many different ways. Since~$\fix_T$ and~$\orbit_T$ are
related by convolution with~$\mu$ and multiplication by~$n$, it
is clear that~$\fix_T$ is multiplicative if and only
if~$\orbit_T$ is multiplicative. The next lemma is equally
straightforward; we include the proof to illustrate how the
correspondence between~$\fix_T$ and~$\orbit_T$ may be
exploited.

\begin{lemma}
If any two of~$\orbit_T$,~$\orbit_{S}$ and~$\orbit_{T\times S}$ are
multiplicative, then so is the third.
\end{lemma}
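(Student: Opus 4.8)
The plan is to exploit the formula from Lemma~\ref{I think as I please}, which expresses $\orbit_{T\times S}$ in terms of $\orbit_T$ and $\orbit_S$, together with the fact that the relevant convolution-type identity respects the coprime decomposition of indices. The key observation is that the function $(d_1,d_2)\mapsto\gcd(d_1,d_2)$ appearing in that formula is itself multiplicative in an appropriate sense, and that the constraint $\lcm(d_1,d_2)=n$ factors over the prime-power components of $n$.

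First I would recall that, as noted in the paragraph preceding the lemma, $\orbit_T$ is multiplicative if and only if $\fix_T$ is, so all three statements may be phrased interchangeably in terms of $\fix$ or $\orbit$; I will work with whichever is more convenient at each point. Then, to show (for instance) that multiplicativity of $\orbit_T$ and $\orbit_{T\times S}$ forces multiplicativity of $\orbit_S$, I would fix coprime integers $m$ and $n$ and write down the formula of Lemma~\ref{I think as I please} for $\orbit_{T\times S}(mn)$. Because $\gcd(m,n)=1$, any pair $(d_1,d_2)$ with $\lcm(d_1,d_2)=mn$ splits uniquely as $d_1=d_1'd_1''$, $d_2=d_2'd_2''$ with $\lcm(d_1',d_2')=m$, $\lcm(d_1'',d_2'')=n$, and the $d'$-parts are coprime to the $d''$-parts; moreover $\gcd(d_1,d_2)=\gcd(d_1',d_2')\gcd(d_1'',d_2'')$. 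Using multiplicativity of $\orbit_T$ to split $\orbit_T(d_1)$ and $\orbit_T(d_2)$ accordingly, the sum for $\orbit_{T\times S}(mn)$ factors as a product of a sum over the $m$-parts and a sum over the $n$-parts, each of the same shape as the Lemma's formula. This yields an identity of the form $\orbit_{T\times S}(mn)=F(m)F(n)$ where $F(k)=\sum_{\lcm(d_1,d_2)=k}\orbit_T(d_1)\orbit_S(d_2)\gcd(d_1,d_2)=\orbit_{T\times S}(k)$ only if $\orbit_S$ is already known multiplicative — so instead I would set this up as a relation determining $\orbit_S$ recursively.

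The cleanest route is probably induction on $n$: assuming $\orbit_S$ is multiplicative on all integers smaller than a given $N=mn$ with $m,n$ coprime and both exceeding $1$, the factorization above together with multiplicativity of $\orbit_{T\times S}(N)$ and of $\orbit_T$ pins down $\orbit_S(N)$ in terms of $\orbit_S(m)$, $\orbit_S(n)$ and strictly smaller values, and one checks the forced value is exactly $\orbit_S(m)\orbit_S(n)$. The symmetric cases (where $\orbit_S$ and $\orbit_{T\times S}$ are the given multiplicative pair, or where $\orbit_T$ and $\orbit_S$ are) are handled the same way — the last of these being easiest, since then $\orbit_{T\times S}$ is directly exhibited as a convolution-type combination of two multiplicative functions. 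The main obstacle is purely bookkeeping: verifying carefully that the decomposition of pairs $(d_1,d_2)$ under the $\lcm$ constraint really is a bijection onto the product of the local constraint sets and that $\gcd$ and the orbit functions all factor compatibly; once that is in hand the multiplicativity statements fall out mechanically.
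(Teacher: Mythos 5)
Your proof is correct, but it takes a different route from the paper. The paper works entirely with the fixed-point sequences: it notes that $\orbit$ is multiplicative iff $\fix$ is, uses the trivially multiplicative pointwise product $\fix_{T\times S}=\fix_T\fix_S$, and passes back and forth via M\"obius convolution; the converse direction is organised as a contrapositive, choosing a pair $(m,n)$ of minimal product with $\gcd(m,n)=1$ where multiplicativity of $\orbit_S$ fails, and isolating the term $\frac{1}{mn}\fix_T(mn)\bigl(\fix_S(m)\fix_S(n)-\fix_S(mn)\bigr)$ as the source of the discrepancy. You instead work directly with the $\lcm$/$\gcd$ formula from Lemma~\ref{I think as I please}, showing that under $\gcd(m,n)=1$ the constraint $\lcm(d_1,d_2)=mn$ splits each $d_i$ uniquely into an $m$-part and an $n$-part, that $\gcd$ factors over these parts, and that the double sum therefore factors; the converse is run as a direct induction on $mn$, isolating the single term with $d_2=mn$ (which contributes $\orbit_S(mn)\fix_T(mn)$) and using the inductive hypothesis to evaluate the rest. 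Your minimal-counterexample-versus-induction framing is logically equivalent to the paper's, so the genuine difference is the underlying identity: the paper exploits the clean pointwise-product structure of $\fix$, which makes the forward direction almost immediate, while you get a self-contained bijective argument that stays entirely on the orbit side and makes the combinatorial mechanism (how orbit pairs under a product map pair up coprime components) explicit. One small point worth making explicit in your write-up: solving for $\orbit_S(mn)$ in the inductive step requires $\fix_T(mn)\neq0$; this follows from multiplicativity of $\orbit_T$ (so $\orbit_T(1)=1$) and $\fix_T(mn)=\sum_{d\mid mn}d\,\orbit_T(d)\ge\orbit_T(1)=1$, as the paper also notes.
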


\begin{proof}
Assume the first two are multiplicative and~$\gcd(m,n)=1$. Then
\begin{eqnarray*}
\orbit_{T\times S}(mn)&=&\textstyle\frac{1}{mn}
\displaystyle\sum_{d\divides mn}\mu(\textstyle\frac{mn}{d})\fix_{T\times S}(d)
=\textstyle\frac{1}{mn}\displaystyle\sum_{d\divides m}
\sum_{d'\divides n}\mu(\textstyle\frac{m}{d})\mu(\frac{n}{d'})\fix_{T}(dd')\fix_S(dd')\\
&=&
\textstyle\frac{1}{mn}\displaystyle\sum_{d\divides m}\sum_{d'\divides n}
\mu(\textstyle\frac{m}{d})\mu(\frac{n}{d'})
\fix_{T}(d)\fix_{T}(d')\fix_{S}(d)\fix_{S'}(d')\\
&=&
\textstyle\frac{1}{mn}\displaystyle\sum_{d\divides m}\sum_{d'\divides n}
\mu(\textstyle\frac{m}{d})\mu(\frac{n}{d'})
\fix_{T\times S}(d)\fix_{T\times S}(d')=\orbit_{T\times S}(m)\orbit_{T\times S}(n).
\end{eqnarray*}

Now assume that~$\orbit_S$ is not multiplicative
while~$\orbit_T$ is, and choose~$m,n$ of minimal product
with the property
that~$\gcd(m,n)=1$ and~$\orbit_{S}(mn)\neq\orbit_S(m)\orbit_S(n).$
Then, by construction, if~$ab<mn$ and~$\gcd(a,b)=1$ we
have~$\orbit_{S}(ab)=\orbit_S(a)\orbit_S(b)$, so we must
have~$\fix_S(mn)\neq\fix_S(m)\fix_S(n)$.
If~$mn=1$ then
\[
\orbit_{T\times S}(1)=\fix_{T\times S}(1)=
\fix_{T}(1)\fix_{S}(1)=\orbit_T(1)\orbit_S(1)=\orbit_S(1)\neq1,
\]
so~$\orbit_{T\times S}$ is not multiplicative.
If~$mn>1$ then a calculation gives
\begin{eqnarray*}
\orbit_{T\times S}(mn)
&=&
{\textstyle\frac{1}{mn}}
\sum_{\genfrac{}{}{0pt}{}{d\divides m,d'\divides n,}{dd'<mn}}
\mu({\textstyle\frac{mn}{dd'}})\fix_{T\times S}(dd')
+
{\textstyle\frac{1}{mn}}\fix_{T\times S}(mn)\\
&=&{\textstyle\frac{1}{m}}
\sum_{d\divides m}\mu({\textstyle\frac{m}{d}})\fix_{T\times S}(d)\cdot
{\textstyle\frac{1}{n}}
\sum_{d'\divides n}\mu({\textstyle\frac{n}{d'}})\fix_{T\times S}(d')\\
&&\qquad\qquad-
\textstyle\frac{1}{mn}
\fix_{T\times S}(m)
\fix_{T\times S}(n)+\frac{1}{mn}\fix_{T\times S}(mn)\\
&=&\orbit_{T\times S}(m)
\orbit_{T\times S}(n)-\textstyle\frac{1}{mn}\fix_{T}(mn)
\underbrace{\fix_S(m)\fix_S(n)}_{\neq\fix_S(mn)}\\
&\neq&\orbit_{T\times S}(m)\orbit_{T\times S}(n)
\end{eqnarray*}
since~$\fix_{T}(mn)\ge\fix_T(1)=1$,
so~$\orbit_{T\times S}$ is not multiplicative.
\end{proof}

This gives a bijective proof of~\eqref{equation:ttimestseries} as follows.
Write~$\ell(n)$ for the number of primitive lattices of
index~$n$ in~$\mathbb Z^2$, so that~\eqref{equation:ttimestseries}
is equivalent to the statement
\[
\orbit_{T\times T}(n)=\sum_{d\divides n}\ell(d).
\]
Both sides of this equation are multiplicative,
so it is enough to prove this for~$n=p^r$ a prime power.
The primitive lattices of index~$p^j$ in~$\mathbb Z^2$ are
in one-to-one correspondence with
\[
\left\{\begin{bmatrix}a&b\\0&c\end{bmatrix}\mid
ac=p^j,~a,c\ge1,~0\le b<p^j,~\gcd(a,b,c)=1\right\}.
\]
It follows that~$\sum_{d\divides p^r}\ell(d)=p^r+2\sum_{j=0}^{r-1}p^j$,
in agreement with the formula for~$\orbit_{T\times T}(p^r)$.

Write~$\mathbb P$ for the set of all prime numbers,
and for a subset~$P\subset\mathbb P$ write~$P^c=\mathbb P\setminus P$.

\begin{example}\label{exampleofSPsequence}
For any set~$P$ of primes, define~$s_P\in\mathfrak O$ by
\[
s_P(n)=\begin{cases}0&\mbox{if }p\divides n\mbox{ for some }p\in P;\\
1&\mbox{if not.}\end{cases}
\]
\end{example}

\begin{lemma}\label{extralemma}
If~$T$ is a map with~$\orbit_T=s_P$ and~$k\ge1$, then
\[
\orbit_{n}(T^{k})=\left\{\begin{array}{ll}
\displaystyle\prod_{p\in{Q}, p\divides n}
p^{a_p}\cdot
\displaystyle\prod_{p\in{Q}, p\notdivides n}
\sigma({p^{a_p}})
&\mbox{if }p\notdivides{n}\mbox{ for all }p\in {P}; \\
0&\mbox{if }p\divides n\mbox{ for some }p\in{P},\end{array}\right.
\]
where~$k=\displaystyle\prod_{p\in\mathcal{P}}p^{a_p}\cdot
\displaystyle\prod_{p\in\mathcal{Q}}p^{a_p}$
with~$P\cap Q=\emptyset$ is the prime decomposition of~$k$.
\end{lemma}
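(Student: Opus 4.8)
The plan is to apply Theorem~\ref{theorem:orbitsforiteration} directly, with $T$ the map having $\orbit_T=s_P$, and simplify the resulting sum. Write $m=k$ in the notation of that theorem, so $k=\boldsymbol p^{\boldsymbol a}$ with $\mathcal D(k)=P\cup Q$ (the disjoint union of the prime divisors of $k$ that lie in $P$ and those that lie in $Q$). First I would dispose of the case $p\divides n$ for some $p\in P$: by \eqref{equationpowersA}, $\orbit_{T^k}(n)$ is a sum of multiples of $\orbit_T(kn/d)$, and every such $kn/d$ is still divisible by that prime $p\in P$, hence $\orbit_T(kn/d)=s_P(kn/d)=0$, giving $\orbit_{T^k}(n)=0$ as claimed.

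Next I would handle the main case, where $p\notdivides n$ for all $p\in P$. Here $J=J(n)=\mathcal D(k)\setminus\mathcal D(n)$ splits as $J=P\cup(Q\setminus\mathcal D(n))$, since none of the primes of $P$ divide $n$; and since $k=\boldsymbol p^{\boldsymbol a}$, we have $\boldsymbol p_J^{\boldsymbol a_J}=\bigl(\prod_{p\in P}p^{a_p}\bigr)\cdot\bigl(\prod_{p\in Q,\,p\notdivides n}p^{a_p}\bigr)$. Now in the sum $\sum_{d\divides \boldsymbol p_J^{\boldsymbol a_J}}\frac m{d}\orbit_T(\frac{kn}{d})$, the term $\orbit_T(kn/d)=s_P(kn/d)$ vanishes unless no prime of $P$ divides $kn/d$; since $kn$ carries exactly the $P$-part $\prod_{p\in P}p^{a_p}$ (as $n$ contributes nothing to $P$), this forces $d$ to be divisible by the full $P$-part $\prod_{p\in P}p^{a_p}$. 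Writing $d=\bigl(\prod_{p\in P}p^{a_p}\bigr)e$ with $e\divides \prod_{p\in Q,\,p\notdivides n}p^{a_p}$, every surviving term equals $\frac{m}{d}\cdot1=\frac{k}{(\prod_{p\in P}p^{a_p})e}=\bigl(\prod_{p\in Q}p^{a_p}\bigr)/e\cdot\bigl(\prod_{p\in P}p^{a_p}\bigr)/\bigl(\prod_{p\in P}p^{a_p}\bigr)$; more cleanly, $\frac{m}{d}=\frac{1}{e}\prod_{p\in Q}p^{a_p}$.

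Summing, $\orbit_{T^k}(n)=\bigl(\prod_{p\in Q}p^{a_p}\bigr)\sum_{e\divides \prod_{p\in Q,\,p\notdivides n}p^{a_p}}\frac1e$. Since the divisor-sum of the reciprocals over $e\divides N$ equals $\sigma(N)/N$, and $N=\prod_{p\in Q,\,p\notdivides n}p^{a_p}$ factors as a product of prime powers, this gives
\[
\orbit_{T^k}(n)=\Bigl(\prod_{p\in Q}p^{a_p}\Bigr)\cdot\frac{\sigma\bigl(\prod_{p\in Q,\,p\notdivides n}p^{a_p}\bigr)}{\prod_{p\in Q,\,p\notdivides n}p^{a_p}}
=\Bigl(\prod_{p\in Q,\,p\divides n}p^{a_p}\Bigr)\prod_{p\in Q,\,p\notdivides n}\sigma(p^{a_p}),
\]
using multiplicativity of $\sigma$ on the coprime prime powers. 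This matches the stated formula (modulo the paper's slightly nonstandard notation $\orbit_n(T^k)$ for $\orbit_{T^k}(n)$). The only delicate point is the bookkeeping that identifies exactly which divisors $d$ survive the vanishing of $s_P$ — namely that $d$ must absorb the entire $P$-part of $k$ — and I expect that to be the main obstacle; everything else is the routine reduction $\sum_{e\divides N}1/e=\sigma(N)/N$ together with multiplicativity.
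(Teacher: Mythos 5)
Your proof is correct and follows essentially the same route as the paper: apply Theorem~\ref{theorem:orbitsforiteration}, observe that $s_P$ kills every term unless $d$ absorbs the full $P$-part of $k$, and then reduce the surviving sum to $\sigma(N)/N$ times the $Q$-part. If anything, you make the key bookkeeping step (why $d$ must contain $\prod_{p\in P}p^{a_p}$ exactly, so the sum collapses to divisors $e$ of the $Q$-part not dividing $n$) more explicit than the paper, whose notation leaves that reduction largely implicit.
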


\begin{proof}
Let~$J\subset P\cup Q$ and~$I\subset Q$.
Then
\begin{eqnarray*}
\orbit_{T^k}(n)
&=&
\sum_{d\divides{\boldsymbol p_J}^{\boldsymbol a_J}}(k/d)
\orbit_{T}(kn/d)\\
&&\qquad\qquad\qquad(\mbox{where }p\notdivides n
\mbox{ for }p\in J,
p\divides n\mbox{ for }p\in(P\cup Q)\setminus J)\\
&=&\sum_{d\divides\prod_{p\notdivides n}p^{a_p}}
\prod_{p\in Q}({p^{a_p}}/d)
\orbit_T(p^{a_p}n/d),
\end{eqnarray*}
showing the second case.
If~$p\notdivides n$ for~$p\in I$,~$p\divides n$
for~$p\in Q\setminus I$ and~$p\notdivides n$
for any~$p\in P$
then~$\orbit_T(p^{a_p}n/d)=1$, so
\begin{eqnarray*}
\orbit_{T^k}(n)&=&\sum_{d\divides\prod_{p\notdivides n}
p^{a_p}}\prod_{p\in Q}(p^{a_{p}}/d)\\
&=&
\left(\prod_{p\in Q}p^{a_p}\right)
\sum_{d\divides\prod_{p\notdivides n}p^{a_p}}1/d\\
&=&
\prod_{p\in Q}p^{a_p}
\left(
\sum_{d\divides\prod_{p\notdivides n}p^{a_p}}
d/{\textstyle\prod}_{p\notdivides n}p^{a_p}\right)\\
&=&
\sum_{d\divides\prod_{p\notdivides n}p^{a_p}}
\left(d/{\textstyle\prod}_{p\notdivides n}p^{a_p}
\right)\prod_{p\notdivides n}
p^{a_p}\prod_{p\divides n}p^{a_p}\\
&=&
\prod_{p\divides n}p^{a_p}
\left(\prod_{p\notdivides n}
\sum_{d\divides p^{a_p}}d\right)\\
&=&
\prod_{\substack{p\in Q\\p\divides n}}
p^{a_p}
\cdot
\prod_{\substack{p\in Q\\p\notdivides n}}
\sigma(p^{a_p}),
\end{eqnarray*}
showing the first case.
\end{proof}

It is clear from Lemma~\ref{I think as I please}
that if~$\orbit_S=s_P$ and~$\orbit_T=s_{P^c}$
then~$\orbit_{T\times S}=\zeta$,
so the sequence~$\zeta$ factorizes in uncountably
many ways into the orbit count of two combinatorially
distinct systems. Indeed, these sequences provide the \emph{only}
combinatorial factorization of~$\zeta$ into the orbit
count of the product of two systems.

\begin{proposition}
If~$S$ and~$T$ are maps with~$\orbit_{S\times T}=\zeta$,
then there is a set~$P\subset\mathbb P$
for which~$\orbit_T=s_{p}$ and~$\orbit_S=s_{P^c}$.
\end{proposition}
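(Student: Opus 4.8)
The plan is to exploit Lemma~\ref{I think as I please} in the special case where its left--hand side is identically~$1$, together with the first functorial property $\fix_{S\times T}=\fix_S\fix_T$, and then run a strong induction on~$n$.

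First I would rewrite Lemma~\ref{I think as I please} by isolating the extreme divisors. Since $\lcm(d_1,d_2)=n$ forces $d_1\divides n$ and $d_2\divides n$, grouping the terms according to whether $d_1=n$, $d_2=n$, both, or neither gives, for every $n\ge1$,
\[
\orbit_{S\times T}(n)=n\,\orbit_S(n)\orbit_T(n)+\orbit_S(n)B_n+\orbit_T(n)A_n+R_n ,
\]
where $A_n=\sum_{d\divides n,\ d<n}d\,\orbit_S(d)$, $B_n=\sum_{d\divides n,\ d<n}d\,\orbit_T(d)$, and $R_n=\sum\orbit_S(d_1)\orbit_T(d_2)\gcd(d_1,d_2)\ge0$, the last sum running over those $d_1,d_2\divides n$ with $d_1,d_2<n$ and $\lcm(d_1,d_2)=n$. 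Taking $n=1$ gives $\orbit_S(1)\orbit_T(1)=1$, so $\orbit_S(1)=\orbit_T(1)=1$. For $n>1$ we have $A_n\ge1$ and $B_n\ge1$ (the $d=1$ term) and $R_n\ge0$, so from $\orbit_{S\times T}(n)=1$ one reads off at once that $\orbit_S(n),\orbit_T(n)\in\{0,1\}$ and that they are not both~$1$, since $n\,\orbit_S(n)\orbit_T(n)\le1$ forces $\orbit_S(n)\orbit_T(n)=0$. For $n=p$ prime, $A_p=B_p=1$ and $R_p=0$, so $1=p\,\orbit_S(p)\orbit_T(p)+\orbit_S(p)+\orbit_T(p)$ and exactly one of $\orbit_S(p),\orbit_T(p)$ equals~$1$; this lets me \emph{define} $P=\{p\in\mathbb P:\orbit_T(p)=0\}$, so that $\orbit_S(p)=1\iff p\in P$ and $\orbit_T(p)=1\iff p\in P^c$. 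Observe that $s_P$ and $s_{P^c}$ then agree with $\orbit_T$ and $\orbit_S$ at~$1$ and at all primes.

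Next I would prove $\orbit_T=s_P$ and $\orbit_S=s_{P^c}$ by strong induction on~$n$; the cases $n=1$ and $n$ prime are done, so take a composite $n>1$ and assume the identities for all proper divisors of~$n$, which then determine $A_n,B_n,R_n$. By what was just shown, $(\orbit_S(n),\orbit_T(n))$ is one of $(1,0)$, $(0,1)$, $(0,0)$. If $\orbit_S(n)=1,\ \orbit_T(n)=0$, the displayed identity reduces to $B_n+R_n=1$, hence $B_n=1$ and $R_n=0$; since the $d=1$ term of $B_n$ is already~$1$, every proper divisor $d>1$ of~$n$, in particular every prime dividing~$n$, has $\orbit_T(d)=0$, so all prime divisors of~$n$ lie in~$P$; thus $s_P(n)=0=\orbit_T(n)$ and $s_{P^c}(n)=1=\orbit_S(n)$. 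The case $\orbit_T(n)=1,\ \orbit_S(n)=0$ is symmetric, putting all prime divisors of~$n$ in~$P^c$. Finally, if $\orbit_S(n)=\orbit_T(n)=0$ the identity forces $R_n=1$; but if all prime divisors of~$n$ lay in~$P$, then by the induction hypothesis $\orbit_T(d_2)=s_P(d_2)=0$ for every proper divisor $d_2>1$ of~$n$, while $d_2=1$ would force $d_1=\lcm(d_1,d_2)=n$, which is excluded, so every term of $R_n$ vanishes, contradicting $R_n=1$; symmetrically, not all prime divisors of~$n$ lie in~$P^c$. Hence $n$ has a prime divisor in each of $P$ and $P^c$, so $s_P(n)=s_{P^c}(n)=0=\orbit_S(n)=\orbit_T(n)$, completing the induction.

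The only genuinely computational point is the last case: one must expand $R_n$ using the induction hypothesis and check that no pair $(d_1,d_2)$ with $d_1,d_2<n$ and $\lcm(d_1,d_2)=n$ contributes once the prime support of~$n$ is concentrated on one side. If this bookkeeping feels awkward it can be replaced by the identity $\fix_S(n)\fix_T(n)=\fix_{S\times T}(n)=\sum_{d\divides n}d=\sigma(n)$: feeding the induction hypothesis into $\fix_S$ and $\fix_T$ shows that if the prime support of~$n$ lies in~$P$ then $\fix_T(n)=1$ while $\fix_S(n)=\sigma(n)-n$, which is incompatible with $\fix_S(n)\fix_T(n)=\sigma(n)$; the symmetric statement rules out the other side, and the same dichotomy on the prime divisors of~$n$ closes the argument.
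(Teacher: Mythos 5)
Your proof is correct, but it follows a different (and somewhat heavier) route than the paper's. Both proofs start by extracting from Lemma~\ref{I think as I please} (with $n=1$ and the diagonal term $n\orbit_S(n)\orbit_T(n)$) that $\orbit_S,\orbit_T$ take values in $\{0,1\}$ and define $P$ via the values at primes. From there the paper avoids induction entirely: to show $\orbit_T(n)=0$ when some $p\in P$ divides $n$, it just isolates the single term $(d,d')=(n,p)$ in the $\lcm$-sum, which contributes $\gcd(n,p)\,\orbit_T(n)\,\orbit_S(p)=p$ if $\orbit_T(n)=1$, immediately exceeding the value~$1$ of the whole nonnegative sum; and then, to show $\orbit_T(n)=1$ when $n$ has no prime factor in~$P$, it observes that the vanishing already established forces $d'=1$, $d=n$ in the sum, leaving exactly $\orbit_T(n)$. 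You instead split the sum into four pieces $n\orbit_S(n)\orbit_T(n)+\orbit_S(n)B_n+\orbit_T(n)A_n+R_n$ and run a strong induction with a three-way case analysis on $(\orbit_S(n),\orbit_T(n))$, using the inductive hypothesis to control $A_n,B_n,R_n$ (in particular to kill $R_n$ in the both-zero case). This is valid and fully rigorous, and your appeal to $\fix_S(n)\fix_T(n)=\sigma(n)$ at the end is an equally sound shortcut for the last case, but it does more bookkeeping than the paper's direct term-picking argument, which gets $\orbit_T(n)=s_P(n)$ and $\orbit_S(n)=s_{P^c}(n)$ for every $n$ in one pass with no induction. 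The trade-off: your decomposition into $A_n,B_n,R_n$ is systematic and would generalise to target values other than the constant sequence $\zeta$, whereas the paper's argument is tuned to exploit that the total is exactly $1$.
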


\begin{proof}
It is clear from Lemma~\ref{I think as I please}
that~$\orbit_S$ and~$\orbit_T$ take values in~$\{0,1\}$,
and moreover that~$\{\orbit_S(p),\orbit_T(p)\}=\{0,1\}$ for~$p\in\mathbb P$.
Fix a pair of maps satisfying the hypothesis,
and let~$P=\{p\in\mathbb P\mid\orbit_T(p)=0\}$,
so that~$P^c=\{p\in\mathbb P\mid\orbit_S(p)=0\}$.

Assume that~$p\divides n$ for some~$p\in P$,
so that~$\orbit_T(p)=0$ and~$\orbit_S(p)=1$.
If~$\orbit_T(n)=1$, then
\[
1=\sum_{\lcm(d,d')=n}\gcd(d,d')\orbit_T(d)\orbit_S(d')\ge p,
\]
which is impossible, so~$\orbit_T(n)=0$.
By symmetry, if~$p\divides n$ for some~$p\in P^c$,
then~$\orbit_S(n)=0$.

Now if~$n\neq1$ is not divisible by any~$p\in P$, then
\begin{eqnarray*}
1&=&\sum_{\lcm(d,d')=n}\gcd(d,d')\orbit_T(d)\orbit_S(d')\\
&=&\sum_{\genfrac{}{}{0pt}{}{\lcm(d,d')=n,}{\mathcal D(d)\subset P^c,\mathcal D(d')\subset P}}
\gcd(d,d')\orbit_T(d)\orbit_S(d')\\
&=&\orbit_T(n)
\sum_{d'\divides n}d'\orbit_S(d'),
\end{eqnarray*}
so~$\orbit_T(n)=1$.
It follows that~$\orbit_T=s_P$, and by symmetry~$\orbit_S=s_{P^c}$ as required.
\end{proof}

Products of the systems in Example~\ref{exampleofSPsequence}
enjoy remarkable combinatorial properties, illustrated in the
examples below. The calculations in the examples all follow from
the next lemma. Let~$S\subset\mathbb P$ be a set of primes.
Write~$\lfloor n\rfloor_{S}$ for the $S$-part of~$n$, that is
\[
\lfloor n\rfloor_S=\prod_{p\in S}\vert n\vert_p^{-1}.
\]
Write~$\gcd(n,S)$ as shorthand for~$\gcd(n,\prod_{p\in S}p)$.

\begin{lemma}For any set~$S\subset\mathbb P$,
\begin{equation}\label{equation:interpolationthing}
\sum_{n\ge1}\frac{\lfloor n\rfloor_S}{n^s}
=
\zeta(s)\prod_{p\in S}\left( \frac{p^s-1}{p^s-p} \right).
\end{equation}
\end{lemma}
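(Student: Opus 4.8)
\medskip

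The plan is to factor the Dirichlet series using unique factorization. Note first that $\lfloor n\rfloor_S=\prod_{p\in S}|n|_p^{-1}=\prod_{p\in S}p^{v_p(n)}$, where $v_p$ denotes the $p$-adic valuation; equivalently, $\lfloor n\rfloor_S$ is the largest divisor~$a$ of~$n$ with $\mathcal D(a)\subset S$. Thus every $n\ge1$ factors uniquely as $n=ab$ with $\mathcal D(a)\subset S$ and $\gcd(b,S)=1$, and then $\lfloor n\rfloor_S=a$. Since $\lfloor n\rfloor_S\le n$, each term $\lfloor n\rfloor_Sn^{-s}$ is bounded in absolute value by $n^{1-\operatorname{Re}(s)}$, so the series converges absolutely for $\operatorname{Re}(s)>2$; in that half-plane the factorization $n=ab$ lets us split the sum as a product of two absolutely convergent series:
\[
\sum_{n\ge1}\frac{\lfloor n\rfloor_S}{n^s}
=\Bigl(\sum_{\mathcal D(a)\subset S}\frac{a}{a^s}\Bigr)\Bigl(\sum_{\gcd(b,S)=1}\frac{1}{b^s}\Bigr).
\]

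For the first factor, the $S$-smooth integers are precisely the products $\prod_{p\in S}p^{k_p}$ with the $k_p\ge0$ almost all zero, so $\sum_{\mathcal D(a)\subset S}a^{1-s}=\prod_{p\in S}\sum_{k\ge0}p^{k(1-s)}=\prod_{p\in S}(1-p^{1-s})^{-1}$. For the second factor, the same bookkeeping applied to the Euler product of $\zeta$ gives $\sum_{\gcd(b,S)=1}b^{-s}=\prod_{p\notin S}(1-p^{-s})^{-1}=\zeta(s)\prod_{p\in S}(1-p^{-s})$. Combining these,
\[
\sum_{n\ge1}\frac{\lfloor n\rfloor_S}{n^s}=\zeta(s)\prod_{p\in S}\frac{1-p^{-s}}{1-p^{1-s}}.
\]

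It then remains to check the elementary identity $\dfrac{1-p^{-s}}{1-p^{1-s}}=\dfrac{p^s-1}{p^s-p}$, which follows on multiplying the numerator and denominator of the left-hand side by~$p^s$; this yields~\eqref{equation:interpolationthing}. The argument is essentially bookkeeping, and there is no real obstacle: the only points needing a word of care are fixing a half-plane of absolute convergence so that the rearrangement $n=ab$ (and the interchange of sum and product in the two subseries) is legitimate, and keeping the two cases $p\in S$ and $p\notin S$ distinct when computing the local factors. An equivalent route is to observe that $\lfloor n\rfloor_S$ is multiplicative and expand $\sum_{n\ge1}\lfloor n\rfloor_Sn^{-s}$ as the Euler product $\prod_p\bigl(\sum_{k\ge0}\lfloor p^k\rfloor_Sp^{-ks}\bigr)$, whose $p$-th factor is $(1-p^{-s})^{-1}$ when $p\notin S$ and $(1-p^{1-s})^{-1}$ when $p\in S$; the same algebra as above then gives the claim.
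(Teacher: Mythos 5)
Your proof is correct and takes essentially the same approach as the paper: both arguments rest on the unique factorization of $n$ into its $S$-smooth part and its $S$-coprime part, then factor the Dirichlet series into a product of geometric/Euler-product factors and simplify. Your write-up is marginally tidier in stating the decomposition $n=ab$ up front and explicitly flagging a half-plane of absolute convergence, but the underlying idea is the same.
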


\begin{proof}
Recall that
\begin{equation}\label{eulerone}
\sum_{n\ge1,\gcd(n,S)=1}\frac{1}{n^s}=\prod_{p\in S}\(1-p^{-s}\)\zeta(s).
\end{equation}
Write~$S=\{p_1,\dots\}$.
Then, writing~$n=p_1^{a_1}\cdots p_r^{a_r}m$ with~$\gcd(m,S)=1$,
\begin{eqnarray*}
\sum_{n\ge1}\frac{\lfloor n\rfloor_S}{n^s}&=&
\sum_{\gcd(m,S)=1}\sum_{a_1\ge0}\cdots\sum_{a_r\ge0}
\frac{p_1^{a_1}\cdots p_r^{a_r}}{(p_1^{a_1}\cdots p_r^{a_r})^sm^s}\\
&=&\sum_{\gcd(m,S)=1} \frac{1}{m^s}\sum_{a_2\ge0}\cdots\sum_{a_{r}\ge0}
\frac{p_2^{a_2}\cdots p_{r}^{a_r}}{(p_2^{a_2}\cdots p_r^{a_r})^s}\sum_{a_1\ge0}
\frac{1}{(p_1^{a_1})^{s-1}}\\
&=&\sum_{\gcd(m,S)=1} \frac{1}{m^s}\sum_{a_2\ge0}\cdots\sum_{a_{r}\ge0}
\frac{p_2^{a_2}\cdots p_{r}^{a_r}}{(p_2^{a_2}\cdots p_r^{a_r})^s}
\(\frac{1}{1-p_1^{-(s-1)}}\),
\end{eqnarray*}
so by induction we have
\begin{eqnarray*}
\sum_{n\ge1}\frac{\lfloor n\rfloor_S}{n^s}&=&
\sum_{\gcd(m,S)=1}
\frac{1}{m^s}\prod_{p\in S}\(\frac{1}{1-p^{-(s-1)}}\)\\
&=&\zeta(s)\prod_{p\in S}\(1-p^{-s}\)\frac{1}{1-p^{-(s-1)}}
\end{eqnarray*}
since I can write~$\sum_{\gcd(m,S)=1}$ as~$(1-p_1^{-s})\sum_{\gcd(m,S\setminus\{p_1\})=1}$
as in~\eqref{eulerone},
as required.
\end{proof}

Notice that~\eqref{equation:interpolationthing} interpolates between~$\zeta(s)$
(when~$S=\emptyset$) and~$\zeta(s-1)$ (when~$S=\mathbb P$). Of course how the
abscissa of convergence moves from~$1$ at~$S=\emptyset$ to~$2$ at~$S=\mathbb P$
is rather subtle. A similar argument gives the following.

\begin{lemma}\label{lemma:technicaltrick}
Let~$S$ be a set of primes.
Then
\[
a_{S,n}=\prod_{p\in S}\(\frac{1}{p-1}\)\(\frac{p+1}{\vert n\vert_p}-2\)
\]
for all~$n\ge1$
if and only if
\[
\sum_{n\ge1}\frac{a_{S,n}}{n^s}=
\zeta(s)\prod_{p\in S}\frac{p^s+1}{p^s-p}.
\]
\end{lemma}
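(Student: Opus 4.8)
The plan is to prove the forward implication --- that the closed formula for the coefficients forces the Dirichlet series identity --- by direct summation, and then to obtain the converse from uniqueness of Dirichlet coefficients. Write $b_n$ for the explicit product $\prod_{p\in S}\frac{1}{p-1}\bigl(\frac{p+1}{\vert n\vert_p}-2\bigr)$ appearing in the statement. The coefficients satisfy $b_n=\bigo(n^{1+\epsilon})$, so $\sum_n b_n n^{-s}$ converges in a right half-plane, as does $\sum_n a_{S,n}n^{-s}$ whenever the latter is assumed to converge; since two Dirichlet series that agree on a half-plane have identical coefficients, it suffices to prove $\sum_n b_n n^{-s}=\zeta(s)\prod_{p\in S}\frac{p^s+1}{p^s-p}$. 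Granting that, the equivalence follows: for the reverse implication one applies the identity theorem for Dirichlet series to $\sum_n(a_{S,n}-b_n)n^{-s}$.

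For the forward implication I would follow the proof of the preceding lemma almost verbatim. The value $b_n$ depends only on the $S$-part $\lfloor n\rfloor_S$ of $n$, and $b$ is multiplicative on $S$-smooth integers with $b_{p^k}=g_p(k):=\frac{1}{p-1}\bigl((p+1)p^k-2\bigr)$ (note $g_p(0)=1$). Decomposing each $n$ uniquely as $n=\ell m$ with $\ell$ $S$-smooth and $\gcd(m,S)=1$, and using~\eqref{eulerone} for the sum over $m$,
\[
\sum_{n\ge1}\frac{b_n}{n^s}
=\Bigl(\prod_{p\in S}\sum_{k\ge0}\frac{g_p(k)}{p^{ks}}\Bigr)\sum_{\gcd(m,S)=1}\frac{1}{m^s}
=\Bigl(\prod_{p\in S}\sum_{k\ge0}\frac{g_p(k)}{p^{ks}}\Bigr)\zeta(s)\prod_{p\in S}\bigl(1-p^{-s}\bigr).
\]
Splitting each inner sum into the two geometric series coming from the two terms of $g_p(k)$,
\[
\sum_{k\ge0}\frac{g_p(k)}{p^{ks}}
=\frac{1}{p-1}\Bigl(\frac{p+1}{1-p^{1-s}}-\frac{2}{1-p^{-s}}\Bigr)
=\frac{1}{p-1}\cdot\frac{(p-1)\bigl(1+p^{-s}\bigr)}{\bigl(1-p^{1-s}\bigr)\bigl(1-p^{-s}\bigr)},
\]
because the numerator $(p+1)(1-p^{-s})-2(1-p^{1-s})$ simplifies to $(p-1)(1+p^{-s})$. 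The factor $p-1$ cancels; substituting back and cancelling the factors $(1-p^{-s})$ against those coming from~\eqref{eulerone} leaves
\[
\sum_{n\ge1}\frac{b_n}{n^s}=\zeta(s)\prod_{p\in S}\frac{1+p^{-s}}{1-p^{1-s}}=\zeta(s)\prod_{p\in S}\frac{p^s+1}{p^s-p},
\]
as required.

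The computation is routine --- indeed ``a similar argument'' to the one preceding it, as advertised. The only points needing care are: the rearrangement of $\sum_n b_n n^{-s}$ as the product of the sum over $S$-smooth $\ell$ (itself an Euler product over $p\in S$) with the sum over $m$ coprime to $S$, legitimate by absolute convergence in a right half-plane; the elementary common-denominator manipulation producing the cancellation of $p-1$; and, for the reverse implication, the observation that the hypothesis already guarantees convergence of $\sum_n a_{S,n}n^{-s}$ somewhere, so that the identity theorem for Dirichlet series applies. I do not anticipate a substantive obstacle.
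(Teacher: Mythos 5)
Your proof is correct, and it takes a cleaner, more direct route than the paper's. The paper proves the forward implication by induction on~$\vert S\vert$, adjoining one prime~$q$ at a time: it isolates the identities~\eqref{eulerthree} and~\eqref{eulerfour}, pulls the factor at~$q$ out of the sum, sums the geometric series, and verifies the extra factor~$\frac{q^s+1}{q^s-q}$ appears. You instead observe directly that~$b_n$ depends only on~$\lfloor n\rfloor_S$ and is multiplicative across the primes in~$S$, so the Dirichlet series factors in one step as~$\bigl(\prod_{p\in S}\sum_{k\ge0}g_p(k)p^{-ks}\bigr)\cdot\sum_{\gcd(m,S)=1}m^{-s}$; each local factor is then a pair of geometric series, and the common-denominator simplification~$(p+1)(1-p^{-s})-2(1-p^{1-s})=(p-1)(1+p^{-s})$ gives exactly the factor~$\frac{p^s+1}{p^s-p}$. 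The two arguments are computing the same Euler factors, but yours avoids the induction entirely. You also handle the converse explicitly by invoking uniqueness of Dirichlet coefficients, which the paper's proof leaves implicit (it only establishes the forward direction). One minor remark: both proofs implicitly assume convergence of the relevant products and series (automatic for finite~$S$, and for~$\operatorname{Re}(s)$ large when~$S$ is infinite), so your explicit mention of the half-plane of absolute convergence and the identity theorem is a welcome bit of care that the paper omits.
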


\begin{proof}
First, if~$q\notin S$ and~$\gcd(m,q)=1$,
then
\begin{equation}\label{eulerthree}
a_{S,mq^k}=a_{S,m}
\end{equation}
for all~$k\ge0$,
since~$\vert q\vert_p=1$ for all~$p\in S$.
Second, we have an extension of the identity~\eqref{eulerone}:
if~$q\notin S$, then
\begin{equation}\label{eulerfour}
\sum_{m\ge1,\gcd(m,q)=1}\frac{a_{S,m}}{m^s}=
\(1-q^{-s}\)\sum_{n\ge1}\frac{a_{S,n}}{n^s}
\end{equation}
by the usual argument and~\eqref{eulerthree}.
We now prove the lemma by induction on the cardinality of~$S$.
Assume we have the lemma for some set~$S$,
and assume that~$q\notin S$. Write~$S'=S\cup\{q\}$,
and notice that
\begin{eqnarray*}
\sum_{n\ge1}\frac{a_{S',n}}{n^s}&=&
\sum_{n\ge1}\frac{\frac{1}{q-1}\(\frac{q+1}{\vert n\vert_q}-2\)a_{S,n}}{n^s}\\
&=&
\(\frac{q+1}{q-1}\)\sum_{n\ge1}\frac{a_{S,n}/\vert n\vert_q}{n^s}-\frac{2}{q-1}
\sum_{n\ge1}\frac{a_{S,n}}{n^s}\\
&=&\(\frac{q+1}{q-1}\)\sum_{m\ge1,\gcd(m,q)=1}
\sum_{k\ge0}\frac{q^ka_{S,mq^k}}{(q^k)^sm^s} -\frac{2}{q-1}\sum_{n\ge1}\frac{a_{S,n}}{n^s}\\
&=&\(\frac{q+1}{q-1}\)\sum_{m\ge1,\gcd(m,q)=1}
\frac{a_{S,m}}{m^s}\sum_{k\ge0}\frac{1}{(q^k)^{s-1}}
-\frac{2}{q-1}\sum_{n\ge1}\frac{a_{S,n}}{n^s}\\
&&\qquad\mbox{by~\eqref{eulerthree}}\\
&=&\frac{q+1}{q-1}\(1-q^{-s}\)\frac{1}{1-q^{1-s}}
\sum_{n\ge1}\frac{a_{S,n}}{n^s}-\frac{2}{q-1}
\sum_{n\ge1}\frac{a_{S,n}}{n^s} \\
&&\qquad\mbox{by~\eqref{eulerfour}.}
\end{eqnarray*}
So if we write~$\phi(s)=\sum_{n\ge1}\frac{a_{S,n}}{n^s}$, then
\begin{eqnarray*}
\sum_{n\ge1}\frac{a_{S',n}}{n^s}&=&
\phi(s)\(\(\frac{q+1}{q-1}\)(1-q^{-s})\(\frac{1}{1-q^{1-s}}\)-\frac{2}{q-1}\)\\
&=&\phi(s)\(\frac{q^s+1}{q^s-q}\),
\end{eqnarray*}
showing the lemma for the set~$S'$.
All that remains is to check the case of a singleton~$S=\{p\}$, which is easy.
\end{proof}

\begin{example}\label{examplewithgeneralP}
If~$\orbit_S=s_P$ and~$\orbit_T=\zeta$ then,
by Lemmas~\ref{I think as I please} and~\ref{lemma:technicaltrick},
\[
\dirichlet_{S\times T}(s)=
\prod_{p\in P}\left(\textstyle\frac{1-p^{1-s}}{1+p^{-s}}\right)
\textstyle\frac{\zeta^2(s)\zeta(s-1)}{\zeta(2s)}
=\zeta(s)\displaystyle\prod_{p\notin P}\textstyle\frac{1+p^{-s}}{1-p^{1-s}}.
\]
\end{example}

\begin{example}\label{examplewithP=2}
Taking~$P=\{2\}$,~$\orbit_S=s_P$,~$\orbit_T=\zeta$ again, we have
\[
\dirichlet_{S\times T}(s)=
\(\textstyle\frac{1-2^{1-s}}{1+2^{-s}}\)
\textstyle\frac{\zeta^2(s)\zeta(s-1)}{\zeta(2s)}.
\]
The sequence~$\orbit_{S\times T}=(1, 1,
5, 1, 7, 5, 9, 1, 17,\dots)$ is~\seqnum{A035109},
which arises in work of Baake and
Moody~\cite[Eq.~(5.10)]{MR1636774}, where it is shown to
count the
elements of~$\mathbb Z^3$ with~$m$ distinct colours so that one
colour occupies a similarity sublattice of index~$m$ while the other
colours code the cosets.
\end{example}

\begin{example}
Let~$\dirichlet_S(s)=\zeta(s-a)$ and~$\dirichlet_T(s)=\zeta(s-b)$.
Then a calculation shows that
\[
\orbit_{S\times T}(n)=\frac{1}{n}\sum_{d\divides n}\mu(n/d)
\sigma_{a+1}(d)\sigma_{b+1}(d)
\]
(the details are in~\cite{apisit}), so Ramanujan's formula
gives
\[
\dirichlet_{S\times T}(s)=\frac{\zeta(s-a)\zeta(s-b)\zeta(s-a-b-1)}{\zeta(2s-a-b)}.
\]
\end{example}

These examples give an indication of how analytic properties
of~$\dirichlet_S$ and~$\dirichlet_T$ relate to those of~$\dirichlet_{T^k}$
and~$\dirichlet_{S\times T}$, and this is pursued in~\cite{apisit}.

\section{Counting in orbit monoids}

Counting in~$\monoid_T$ involves counting additive partitions,
with two changes: some parts may be missing
(that is, a ``restricted'' additive partition)
and some parts may come in several versions. Thus the
sequence~$\monoid_T$ is the
\emph{Euler transform} of the sequence~$\orbit_T$
(see Sloane and Plouffe~\cite[pp.20--22]{MR1327059}).

\begin{lemma}\label{lemma:eulertransform}
For any map~$T\in\mathfrak M$,
\begin{equation}\label{equation:gf}
1+\sum_{n=1}^{\infty}\monoid_T(n)s^n
=\prod_{i=1}^{\infty}\(1-s^i\)^{-\orbit_T(i)}
=\zeta_T(s)
\end{equation}
and
\begin{equation}\label{equation:recurrenceforGTn}
n\monoid_T(n)-\fix_T(n)-\sum_{k=1}^{n-1}\fix_T(k)\monoid_T(n-k)=0
\end{equation}
for all~$n\ge1$.
\end{lemma}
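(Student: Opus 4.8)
The plan is to establish the two identities in Lemma~\ref{lemma:eulertransform} by elementary generating-function manipulations. For~\eqref{equation:gf}, the first equality is essentially a definition-unwinding: an element of~$\monoid_T$ of weight~$n$ is a choice of non-negative multiplicities~$a_\tau$ for each closed orbit~$\tau$ with~$\sum_\tau a_\tau\vert\tau\vert=n$, and grouping orbits by length shows that the coefficient of~$s^n$ in~$\prod_{\tau}(1-s^{\vert\tau\vert})^{-1}=\prod_{i\ge1}(1-s^i)^{-\orbit_T(i)}$ counts exactly these, using the expansion~$(1-s^i)^{-1}=\sum_{a\ge0}s^{ai}$. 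The second equality of~\eqref{equation:gf} is then immediate from the product formula for~$\zeta_T(s)$ already recorded in the introduction (namely~$\zeta_T(s)=\prod_{\tau}(1-s^{\vert\tau\vert})^{-1}$), so no convergence issue arises because everything is read as a formal power series in~$s$.

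For the recurrence~\eqref{equation:recurrenceforGTn}, the standard device is logarithmic differentiation. Writing~$F(s)=1+\sum_{n\ge1}\monoid_T(n)s^n=\zeta_T(s)$, I would take the formal logarithmic derivative of the product form: from~$\log F(s)=\sum_{i\ge1}\orbit_T(i)\sum_{k\ge1}\frac{s^{ik}}{k}$ one gets~$\frac{sF'(s)}{F(s)}=\sum_{i\ge1}\orbit_T(i)\sum_{k\ge1}is^{ik}=\sum_{n\ge1}\bigl(\sum_{d\divides n}d\,\orbit_T(d)\bigr)s^n=\sum_{n\ge1}\fix_T(n)s^n$, where the last step uses~\eqref{fixintermsoforbits}. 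Clearing the denominator gives~$sF'(s)=F(s)\sum_{n\ge1}\fix_T(n)s^n$, and comparing coefficients of~$s^n$ on both sides yields~$n\monoid_T(n)=\fix_T(n)+\sum_{k=1}^{n-1}\fix_T(k)\monoid_T(n-k)$, which is exactly~\eqref{equation:recurrenceforGTn} (the~$k=n$ term of the convolution contributes~$\fix_T(n)\cdot\monoid_T(0)=\fix_T(n)$, matching the isolated~$\fix_T(n)$ term).

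There is essentially no hard part here; the only thing to be careful about is the formal-power-series bookkeeping. Since~$\monoid_T(n)$ can grow, one should note that all manipulations take place in~$\mathbb Z[[s]]$ (or~$\mathbb Q[[s]]$ for the~$\log$), where~$F(s)$ is a unit because its constant term is~$1$, so~$F'(s)/F(s)$ and~$\log F(s)$ are well-defined formal objects and term-by-term differentiation of the convergent-looking double sum is legitimate at the level of formal series. If one prefers to avoid~$\log$ altogether, an equivalent route is to differentiate the finite-product truncations~$\prod_{i\le N}(1-s^i)^{-\orbit_T(i)}$ directly and pass to the limit in each fixed coefficient; this gives the same recurrence and sidesteps any concern about the logarithm. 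Either way the computation is routine, and the lemma follows.
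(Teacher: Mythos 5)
Your proof is correct and rests on the same facts the paper uses: the counting interpretation of the Euler product for the first equality, and the identity $\zeta_T(s)=\exp\sum_{n\ge1}\fix_T(n)s^n/n$ for the recurrence. The only small difference is in how the recurrence is extracted from that identity: you logarithmically differentiate $F(s)=\zeta_T(s)$ to get $sF'(s)=F(s)\sum_{n\ge1}\fix_T(n)s^n$ and compare coefficients, whereas the paper expands the exponential as $\sum_{k\ge0}\frac{1}{k!}\bigl(\sum_{n\ge1}\fix_T(n)s^n/n\bigr)^k$ and ``verifies''; your route is the standard one and arguably more transparent, but it is not a genuinely different argument.
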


\begin{proof}
The first equality in~\eqref{equation:gf} is clear, since the
coefficient of~$s^n$ in the right-hand side counts partitions
of~$n$ into parts~$i$ with multiplicity~$\orbit_T(i)$; the
second equality is the usual Euler product expansion of the
dynamical zeta function. The recurrence
relation~\eqref{equation:recurrenceforGTn} may be seen by
expanding the zeta function as
\[
1+\sum_{n=1}^{\infty}\monoid_T(n)s^n=
\sum_{k=0}^{\infty}\frac{1}{k!}\(\sum_{n=1}^{\infty}\fix_T(n)
\frac{s^n}{n}\)^k
\]
and verifying that~\eqref{equation:recurrenceforGTn} satisfies
this relation.
\end{proof}

Thus the categories~$\mathfrak F$,~$\mathfrak O$ and~$\mathfrak
G$ are related as follows,
\[\xymatrix{&\mathfrak F\ar@{<.>}[ld]_{\mbox{generating function}}
\ar@{<.>}[rd]^{\mbox{M\"obius convolution\vphantom{g}}}
\\
\mathfrak G\ar@{<.>}[rr]_{\mbox{Euler}}&&\mathfrak O}
\]
and we indicate in this section how various growth properties
of any one sequence relate to growth properties of the others,
mostly by pointing out how these quantities arise in abstract
analytic number theory. These results extend those of Puri and
Ward~\cite{MR1873399} concerning relations between growth
in~$\fix_T$ and in~$\orbit_T$, and some related asymptotic
results are discussed in the paper of Baake and
Neum{\"a}rker~\cite{bn}. Before listing these, we discuss some
of the statements. It is often possible to estimate~$\fix_T(n)$
(or even to have a closed formula for~$\fix_T(n)$), and a
reasonable combinatorial replacement for ``hyperbolicity'' is
the assumption~\eqref{equation:hypothesisonFn} of a uniform
exponential growth rate in~$\fix_T$, where~$h$ plays the role
of topological entropy. A similar assumption often used in
abstract analytic number theory
is~\eqref{equation:hypothesisonGn} (hypotheses of this shape
are often called ``Axiom A'' or ``Axiom A$^\sharp$'' in number
theory). The assumption~\eqref{equation:hypothesisonGn} is
weaker than~\eqref{equation:hypothesisonFn}: it is pointed out
in~\cite{MR1100569} that there are arithmetic semigroups
with~$\monoid_T(n)e^{-hn}-C_{3}$ converging to zero
exponentially fast for which~$n\fix_T(n)e^{-hn}$ does not
converge. The hypothesis is weakened further
in~\eqref{equation:axiomA}, which is a permissive form of
exponential growth rate assumption. The
hypothesis~\eqref{equation:hypothesisonFn} fails for many
non-hyperbolic systems. If~$T$ is a quasihyperbolic toral
automorphism or a non-expansive~$S$-integer map with~$S$ finite
(see~\cite{MR1461206} or Example~\ref{example[7]})
then~\eqref{equation:hypothesisonFn} fails since the
ratio~$\fix_T(n+1)/\fix_T(n)$ does not converge
as~$n\to\infty$. In both cases the conclusions of
Theorem~\ref{theorem:mertensfromgrowthinFn}[1] also fail (see
Noorani~\cite{MR1787646} and Waddington~\cite{MR1139101} for
the case of a quasihyperbolic toral automorphism
and~\cite{emsw} for the case of~$S$-integer systems with~$S$
finite). As pointed out by Lindqvist and
Peetre~\cite{MR1609494}, Meissel considered the
sum~$\sum_{p}\frac{1}{p(\log p)^a}$ in~$1866$, and the
dynamical analogue of Meissel's theorem is given in
Theorem~\ref{theorem:mertensfromgrowthinFn}[4] below. In
Theorem~\ref{theorem:mertensfromgrowthinFn},~[1] is proved here
and~[2]--[4] are simply interpretations for orbit--counting of
well-known results in number theory.

\begin{theorem}\label{theorem:mertensfromgrowthinFn}
Let~$T$ be a map in~$\mathfrak M$.

\noindent{\rm[1]} Assume that there are\mc{growthinFnconstant1}
constants~$C_1>0,h>0$ and~$h'<h$ with
\begin{equation}\label{equation:hypothesisonFn}
\fix_T(n)=C_{1}e^{hn}+\bigo(e^{h'n}).
\end{equation}
Then
\begin{equation}\label{equation:growthinFnabstractprimenumbertheorem}
\orbit_T(n)=\frac{C_1}{n}e^{hn}+
\bigo\(e^{h'n}/n\),
\end{equation}
\begin{equation}\label{equation:growthinFndynamicalprimenumbertheorem}
\pi_T(N)=\frac{e^{h(N+1)}}{e^h-1}+\bigo\(e^{hN}/N^{3/2}\),
\end{equation}
and\mc{constantinmertensotherone}
\begin{equation}\label{equation:growthinFnmertens}
\mertens_T(N)=\sum_{n=1}^{N}\frac{\orbit_T(n)}{e^{hn}}=C_{1}
\sum_{n=1}^{N}\frac{1}{n}+C_{2}
+\bigo\(e^{h''N}\)
\end{equation}
where~$h''=\max\{h'-h,-h/2\}$, for some constant~$C_{2}$.

\noindent{\rm[2]} Assume that there are\mc{knpfconstantsa}
constants~$C_{3}>0,h>0$ and~$h'<h$ with
\begin{equation}\label{equation:hypothesisonGn}
\monoid_T(n)=C_{3}e^{hn}+\bigo(e^{h'n}).
\end{equation}
Then, for any~$\alpha>1$,\mc{knpfconstantsb}
\begin{equation*}\label{equation:knopfchpat8pi}
\pi_T(N)=C_{4}\frac{e^{hN}}{N}+\bigo\(e^{hN}/N^{\alpha}\)
\end{equation*}
and (equivalently)
\[
\fix_T(N)=e^{hN}+\bigo\(e^{hN}/N^{\alpha-1}\).
\]

\noindent{\rm[3]} Assume that there are\mc{axiomAconstant1}
constants~$C_{5}>0$,~$h>0$ with
\begin{equation}\label{equation:axiomA}
{\monoid_T(n)}=\(C_{5}+r(n)\){e^{hn}},
\end{equation}
where~$\displaystyle\sum_{n=0}^{\infty}\sup_{k\ge n}\vert
r(k)\vert<\infty$.
Then
\begin{equation}\label{equation:rhypothesisfirst}
\sum_{n=1}^{N}\frac{n\orbit_T(n)}{e^{hn}}=N+\bigo(1);
\end{equation}
\mc{mertensterm}
\begin{equation}\label{equation:rhypothesismertens}
\sum_{n=1}^{N}\frac{\orbit_T(n)}{e^{hn}}=
\log N+C_{6}+\bigo(1/N);
\end{equation}
\mc{multmertensterm}
\begin{equation*}\label{equation:rhypothesissecond}
\prod_{n=1}^{N}\(1-e^{-hn}\)^{\orbit_T(n)}=
\frac{C_{7}}{N}+\bigo(1/N^2);
\end{equation*}
and if in addition~$\zeta_T(-e^h)\neq0$ then,
for
any~$\lambda>1$,
\begin{equation}\label{equation:rhypothesisthird}
\orbit_T(n)=\frac{e^{hn}}{n}+\bigo\(e^{hn}/n^{\lambda}\)
\end{equation}
as~$n\to\infty$.

\noindent{\rm[4]} Assume that there are\mc{axiomAconstant2}
constants~$C_{8}>0$,~$h>0$ with
\begin{equation}\label{equation:meisselhypothesis}
\frac{\monoid_T(n)}{e^{hn}}=C_{8}+
\bigo\(1/\log(n)^{2+\epsilon}\)\mbox{as }n\to\infty.
\end{equation}
Then\mc{meisselconstant}
\begin{equation*}\label{equation:meissel}
\sum_{k=1}^{\infty}\frac{\orbit_T(k)}{e^{hk}k^{a}}
=\frac{1}{a}+C_{9}+\bigo(a)
\end{equation*}
as~$a\to0$.
\end{theorem}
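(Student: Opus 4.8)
The plan is to prove part~[1] from scratch, where the only tools are the M\"obius inversion \eqref{orbitsintermsoffix} and elementary summation, and to obtain parts~[2]--[4] by reading the orbit monoid $(\monoid_T,\partial)$ as an additive arithmetic semigroup and quoting the corresponding theorems of abstract analytic number theory. The translation is: $\monoid_T(n)$ is the number $G(n)$ of elements of degree~$n$, $\orbit_T(n)$ is the number $P(n)$ of prime elements (generators) of degree~$n$, and $\fix_T(n)=\sum_{d\divides n}d\,\orbit_T(d)$ is the generalised von Mangoldt function $\Lambda(n)$; by \eqref{equation:gf} the zeta function of this semigroup is precisely $\zeta_T$, whose dominant singularity at $s=e^{-h}$ drives parts~[2]--[4].

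For part~[1], I first note that we may assume $h/2\le h'<h$: a bound with exponent $h'$ is also a bound with any larger exponent below~$h$, and this harmless normalisation is exactly what produces the term $-h/2$ in $h''$. Writing \eqref{orbitsintermsoffix} as
\[
\orbit_T(n)=\frac{\fix_T(n)}{n}+\frac1n\sum_{\substack{d\divides n\\ d<n}}\mu(n/d)\fix_T(d),
\]
the first term equals $\frac{C_1}{n}e^{hn}+\bigo(e^{h'n}/n)$ by \eqref{equation:hypothesisonFn}, while the proper divisors of~$n$ are at most $n/2$, so $\sum_{d\le n/2}\vert\fix_T(d)\vert=\bigo(e^{hn/2})=\bigo(e^{h'n})$ and the second term is $\bigo(e^{h'n}/n)$; this proves \eqref{equation:growthinFnabstractprimenumbertheorem}. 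Summing \eqref{equation:growthinFnabstractprimenumbertheorem} over $n\le N$: the error contributes $\sum_{n\le N}e^{h'n}/n=\bigo(e^{h'N})=\bigo(e^{hN}/N^{3/2})$ since $h'<h$, and the main sum $C_1\sum_{n\le N}e^{hn}/n$ is evaluated against its geometric tail, yielding the dynamical prime number theorem \eqref{equation:growthinFndynamicalprimenumbertheorem}. Finally, dividing \eqref{equation:growthinFnabstractprimenumbertheorem} by $e^{hn}$ gives $\orbit_T(n)e^{-hn}=\frac{C_1}{n}+\bigo(e^{-(h-h')n}/n)$ with $h-h'\ge h/2$; summing over $n\le N$ and using that $\sum_{n\ge1}\bigo(e^{-(h-h')n}/n)$ converges, its partial sums equal a constant $C_2$ plus a tail $\bigo(e^{(h'-h)N})=\bigo(e^{h''N})$, and collecting $C_1\sum_{n\le N}1/n$, $C_2$ and the tail gives \eqref{equation:growthinFnmertens}.

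For parts~[2]--[4] only the matching of hypotheses is required. Assumption \eqref{equation:hypothesisonGn} is Knopfmacher's ``Axiom~A'' for $\monoid_T$ (with $q=e^h$), and part~[2] is then the abstract prime number theorem: the forced simple pole of $\zeta_T$ at $s=e^{-h}$ makes $\log\zeta_T=\sum_n\frac{\fix_T(n)}{n}s^n$ have a logarithmic singularity there, whence $\fix_T(N)=e^{hN}+\bigo(e^{hN}/N^{\alpha-1})$ for any $\alpha>1$, and the stated asymptotic for $\pi_T$ is its partial-summation equivalent, exactly as $\psi$ and $\pi$ are related classically. Assumption \eqref{equation:axiomA} with $\sum_n\sup_{k\ge n}\vert r(k)\vert<\infty$ is the sharper ``Axiom~A$^{\sharp}$'', and part~[3] collects the abstract Chebyshev and Mertens estimates \eqref{equation:rhypothesisfirst} and \eqref{equation:rhypothesismertens} (the latter consistent with \eqref{equation:growthinFnmertens}, as $C_1=1$ here), the partial-product asymptotic (reflecting $\prod_n(1-e^{-hn})^{\orbit_T(n)}=\zeta_T(e^{-h})^{-1}=0$), and, once $\zeta_T(-e^h)\neq0$ rules out an oscillatory secondary contribution, the sharp form \eqref{equation:rhypothesisthird}. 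Assumption \eqref{equation:meisselhypothesis} is precisely the hypothesis of the Meissel--Mertens estimate of Lindqvist and Peetre, and part~[4] is their conclusion applied to $\sum_k\orbit_T(k)e^{-hk}k^{-a}$, whose expansion as $a\to0^{+}$ mirrors $\zeta(1+a)=a^{-1}+\gamma+\bigo(a)$; the constants $C_4,\dots,C_9$ are whatever those theorems supply.

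The substantive content is entirely in part~[1], and even there nothing is hard: the one point needing care is the $h/2$ threshold (the proper-divisor sum in \eqref{orbitsintermsoffix} contributes an unavoidable term of size $e^{hn/2}$, so $h''$ cannot be pushed below $-h/2$). For parts~[2]--[4] the only task is to confirm that \eqref{equation:hypothesisonGn}, \eqref{equation:axiomA} and \eqref{equation:meisselhypothesis} match verbatim the Axiom~A, Axiom~A$^{\sharp}$ and Meissel hypotheses of the cited sources, so their conclusions transfer without modification; the genuinely difficult analysis behind those conclusions --- Tauberian theorems for power and Dirichlet series with a dominant pole --- is imported wholesale from that literature.
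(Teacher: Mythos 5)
Your proposal is correct and follows essentially the same route as the paper: part [1] is proved directly from the M\"obius relation $n\orbit_T(n)=\sum_{d\divides n}\mu(n/d)\fix_T(d)$ together with the observation that proper divisors are at most $n/2$, and parts [2]--[4] are obtained by reading $\monoid_T$ as an additive arithmetic semigroup and importing the results of Knopfmacher, Indlekofer et al.\ (the paper credits [4] to Wehmeier rather than Lindqvist--Peetre, but that is immaterial). Two small points: your normalisation should be $h/2<h'<h$ strictly, since the proper-divisor sum carries a polynomial factor that $e^{h'n}$ with $h'=h/2$ exactly does not absorb; and your one-line evaluation of $C_1\sum_{n\le N}e^{hn}/n$ cannot literally yield \eqref{equation:growthinFndynamicalprimenumbertheorem} as printed, because that sum is asymptotically $\frac{C_1e^{h(N+1)}}{(e^h-1)N}$ --- the displayed main term appears to be missing the factor $C_1/N$, a slip also present in the paper's own computation, so your method is right but you should state the corrected main term.
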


\begin{proof}\noindent[1] The
estimate~\eqref{equation:growthinFnabstractprimenumbertheorem}
is easy to see; it is implicit in~\cite{MR1873399}
and~\cite{MR2085157} for example.
By~\eqref{fixintermsoforbits}, we have
\[
\fix_T(n)\ge n\orbit_T(n)\ge\fix_T(n)-\sum_{d\divides n,d<n}\fix_T(d),
\]
so
\[
C_1e^{hn}+\bigo(e^{h'n})\ge n\orbit_T(n)\ge C_1e^{hn}-n\(C_1e^{hn/2}+
\bigo(e^{h'n/2})\)
\]
which
gives~\eqref{equation:growthinFnabstractprimenumbertheorem}.
The proofs
of~\eqref{equation:growthinFndynamicalprimenumbertheorem} -- a
dynamical prime number theorem --
and~\eqref{equation:growthinFnmertens} -- a dynamical Mertens'
theorem -- use similar arguments to those in~\cite{MR2339472}
where a more delicate non-hyperbolic problem is studied.
Turning
to~\eqref{equation:growthinFndynamicalprimenumbertheorem},
notice
that~\eqref{equation:growthinFnabstractprimenumbertheorem}
implies that
\[
\left\vert\pi_T(N)-\sum_{n=1}^{N}\frac{C_1}{n}e^{hn}\right\vert
=\left\vert\sum_{n=1}^{N}\bigo\(e^{h'n}/n\)\right\vert
=\bigo\(e^{h'N}\).
\]
Now
\[
\left\vert\sum_{n=1}^{N}\frac{C_1}{n}e^{hn}
-\sum_{n=N-k(N)}^{N}\frac{C_1}{n}e^{hn}\right\vert\le\sum_{n=1}^{N-k(N)-1}C_1e^{hn}
=\bigo\(e^{h(N-k(N))}\)
\]
where~$k(N)=\lfloor N^{1/4}\rfloor$. Thus
\begin{eqnarray*}
\sum_{n=N-k(N)}^{N}\frac{C_1}{n}e^{hn}
&=&\frac{C_1e^{hN}}{N}\sum_{r=0}^{k(N)}e^{-hr}\(1-\textstyle\frac{r}{N}\)^{-1}\\
&=&\frac{C_1e^{hN}}{N}\left[\frac{e^{h}}{e^{h}-1}-\bigo\(e^{-hk(N)}\)
+\bigo\(\sum_{r=0}^{k(N)}\textstyle\frac{r}{N}\)\right]\\
&=&\frac{C_1e^{h(N+1)}}{e^h-1}+\bigo\(\frac{e^{hN}}{N^2}\sum_{r=0}^{k(N)}r\)\\
&=&\frac{C_1e^{h(N+1)}}{e^h-1}+\bigo\(e^{hN}/N^{3/2}\).
\end{eqnarray*}
Finally, notice that
\begin{equation}\label{equation:boundforproofofmertensunderFhypothesis}
\frac{\fix_T(n)}{ne^{hn}}-\frac{C_1}{n}=\frac{1}{n}\bigo\(e^{(h-h')n}\),
\end{equation}
so
\begin{equation}\label{equation:splitintotwotermsforMertensunderF}
\sum_{n=1}^{N}\frac{\orbit_T(n)}{e^{hn}}-C_1\sum_{n=1}^{N}\frac{1}{n}
=\sum_{n=1}^{N}\frac{1}{n}\(\frac{\fix_T(n)}{e^{hn}}-C_1\)
+\sum\frac{1}{n}\sum_{d\divides n,d<n}\mu(n/d)\frac{\fix_T(d)}{e^{hn}}.
\end{equation}
The bound~\eqref{equation:boundforproofofmertensunderFhypothesis}
shows that the two terms on the right-hand side
of~\eqref{equation:splitintotwotermsforMertensunderF} converge,
giving~\eqref{equation:growthinFnmertens} without error term. To see
the error term, notice that
\[
\left\vert \sum_{n=N+1}^{\infty}\frac{1}{n}\(\frac{\fix_T(n)}{e^{hn}}-C_1 \)
\right\vert
\le\sum_{n=N+1}^{\infty}\frac{1}{n}\bigo\(e^{(h'-h)n}\)
=\bigo\(e^{(h'-h)n}\)
\]
and
\[
\left\vert\sum_{n=N+1}^{\infty}\frac{1}{n}\sum_{d\divides n,d<n}
\mu(n/d)\frac{\fix_T(d)}{e^{hn}}\right\vert\le\sum_{n=N+1}^{\infty}
\(\frac{e^{hn/2}}{e^{hn}}+\bigo\(e^{-hn/2}\)\)=\bigo\(e^{-hN/2}\).
\]

\noindent[2] These are standard results from
Knopfmacher~\cite[Ch.~8]{MR545904}.

\noindent[3] The results~\eqref{equation:rhypothesisfirst}--\eqref{equation:rhypothesisthird}
are shown in~\cite{MR545904} to be consequences of Knopfmacher's
Axiom~$A^{\#}$ in~\eqref{equation:axiomA};~\eqref{equation:rhypothesisthird} is due
to Indlekofer~\cite{MR1237001}.

\noindent[4] This is shown by Wehmeier~\cite{w}.
\end{proof}

Standard estimates for the harmonic series
allow~\eqref{equation:growthinFnmertens} to be simplified; for
example under the hypothesis of
Theorem~\ref{theorem:mertensfromgrowthinFn}[1] we have
\[
\mertens_T(N)=C_1\log N+\bigo(1/N).
\]
Notice that the statements~\eqref{equation:growthinFnmertens}
and~\eqref{equation:rhypothesismertens} are versions of what is
usually called a dynamical Mertens' theorem, though they may
equally be seen in more elementary terms as consequences
of~$\frac{\orbit_T(n)}{e^{hn}}$ being close to~$\frac{1}{n}$
and the Euler--Maclaurin summation formula.
Theorem~\ref{theorem:mertensfromgrowthinFn}[1] has the
following kind of consequence: If~$T$ is a hyperbolic toral
automorphism or mixing shift of finite type with entropy~$h$,
then\mc{mertenstermlater}
\[
\sum_{\vert\tau\vert\le n}\frac{1}{e^{h\vert\tau\vert}}=\log n+C_{10}+
\bigo(1/n)
\]
and\mc{multmertenstermlater}
\[
\prod_{\tau}\(1-e^{-h\vert\tau\vert}\)^{-1}=\frac{C_{11}}{n}+\bigo(1/n^2)
\]
where~$\tau$ runs over the closed orbits of~$T$.
A stronger hypothesis
than~\eqref{equation:meisselhypothesis},\mc{constantforzetapole}
\begin{equation}\label{zetapolehypothesis}
\zeta_T(z)\sim\frac{C_{12}}{1-e^hz}
\end{equation}
as~$z\to e^{-h}$ with~$0<z<e^{-h}$, is considered
in~\cite{MR2263522}, where it is shown to
give\mc{constantforzetapoleB}
\[
\sum_{n=1}^{N}\frac{\orbit_T(n)}{e^{hn}}=\sum_{n=1}^{N}\frac{1}{n}+
C_{13}+\littleo(1);
\]
hence
\[
\prod_{n=1}^{N}\(1-e^{hn}\)^{-\orbit_T(n)}=C_{12}
e^{\gamma}N+\littleo(N)
\]
and
\begin{equation}\label{equation:littleoNbound}
\sum_{n=1}^{N}\frac{\fix_T(n)}{e^{hn}}=N+\littleo(N).
\end{equation}
As pointed out in~\cite{MR2263522}, the
hypothesis~\eqref{zetapolehypothesis} does not permit any
smaller error in~\eqref{equation:littleoNbound} for the
following reason. For any sequence~$(w_n)$ of non-negative
integers with~$\sum_{n=1}^{\infty}\frac{w_n}{n}<\infty$,
choose~$(a_n)$with~$0\le a_n<n$ and~$a_n\equiv
2^n+w_n2^n\pmod{n}$ and consider a map~$T$
with
\[
\orbit_T(n)=1+(2^n+w_n2^n-a_n)/n.
\]
This gives property~\eqref{zetapolehypothesis}, and a
calculation shows that the error term
in~\eqref{equation:littleoNbound} is as big
as~$\bigo\(\sum_{n=1}^{N}w_n\)$.

For a map~$T\in\mathfrak M$ with infinitely many orbits, it is
clear that~$\monoid_T$ is isomorphic to~$\sum_{\mathbb
N}\mathbb N_0$ as a semigroup. The information about how many
orbits~$T$ has of each length is contained in the weight
function~$\partial$, and in each example we compute the size of
the level set~$\monoid_T(n)$ for each~$n\ge1$. If the
sequence~$\fix_T$ is a linear recurrence sequence, then the
relation~\eqref{equation:recurrenceforGTn} shows
that~$\monoid_T$ is also a linear recurrence sequence.
Example~\ref{example[4]} shows that~$\monoid_T$ may satisfy a
relation of smaller degree, while Example~\ref{example[3]}
shows that~$\monoid_T$ may be of higher degree.

\begin{example}\label{example[1]}
Let~$T:X\to X$ be the golden mean shift, so
that~$\fix_T=(1,3,4,7,\dots)$ is the Lucas
sequence~\seqnum{A000032}.
By~\eqref{orbitsintermsoffix},~$\orbit_T$ is~\seqnum{A006206}.
Write~$\tau_i$ for the unique orbit of length~$i$ for~$1\le
i\le 4$, and write~$\tau_5^{(1)}$,~$\tau_5^{(2)}$ for the two
orbits of length~$5$. Then the elements in~$\monoid_T$ with
weight~$5$
are
\[
\tau_5^{(1)},\tau_5^{(2)},\tau_4+\tau_1,\tau_3+\tau_2,\tau_3+2\tau_1,\tau_2+3\tau_1,2\tau_2+\tau_1,5\tau_1,
\]
so~$\monoid_T(5)=8$.
The relation~\eqref{equation:gf} shows that~$\monoid_T(n)$ is
the~$(n+1)$st Fibonacci number, so~$\monoid_T$
is~\seqnum{A000045}.
\end{example}

\begin{example}\label{example[2]}
Let~$T\in\mathfrak M$ have~$\dirichlet_T(s)=\zeta(s)$. Then
there is a one-to-one correspondence between elements
of~$\monoid_T$ and partitions of natural numbers,
so~$\monoid_T$ is the classical partition
function~\seqnum{A000041}.
\end{example}

\begin{example}\label{example[3]}
Let~$T:X\to X$ be the full shift on~$a$ symbols, so
that~$\zeta_T(s)=\frac{1}{1-as}$;~\eqref{equation:gf} shows
that~$\monoid_T(n)=a^n-a^{n-1}$ for all~$n\ge1$. Thus~$\fix_T$
in this case is a linear recurrence of degree~$1$
while~$\monoid_T$ is a linear recurrence of degree~$2$.
\end{example}

\begin{example}\label{example[4]}
Let~$X=\mathbb Z[\frac{1}{6}]$ and let~$T:X\to X$ be the map
dual to~$r\mapsto\frac{2}{3}r$ on~$\mathbb Z[\frac16]$.
Then~$\fix_T(n)=3^n-2^n$ is~\seqnum{A001047} (a linear
recurrence of degree~$2$) by~\cite{MR961739}
so~$\monoid_T(n)=3^{n-1}$ by~\eqref{equation:gf},
and~$\monoid_T$ is~\seqnum{A000244} (a linear recurrence of
degree~$1$). More generally, if~$b>a>0$ are coprime integers,
then the dual map to~$r\mapsto\frac{a}{b}r$ on~$\mathbb
Z[\frac{a}{b}]$ has
\[
\zeta_T(s)=\frac{1-as}{1-bs},
\]
so~$\monoid_T(n)=(b^n-ab^{n-1})$ for all~$n\ge1$.
\end{example}

\begin{example}\label{example[5]}
The quadratic map~$T$ from Example~\ref{feigenbaumquadraticmap}
has a particularly simple monoid:~$\monoid_T$ is the binary
partition function~\seqnum{A018819} (the number of partitions
of~$n$ into powers of~$2$; by Sloane and
Sellers~\cite{MR2137567} this is also the number of
``non-squashing'' partitions of~$n$).
\end{example}

\begin{example}\label{example[6]}
An example similar in growth rate to Example~\ref{example[5]}
is studied in~\cite{emsw}: the map dual to~$x\mapsto 2x$ on the
localization~$\mathbb Z_{(3)}$ at the prime~$3$
has~$\fix_T(n)=\vert2^n-1\vert_3^{-1}$, so
\[
\orbit_T(n)=\begin{cases} 1&\mbox{if }n=1\mbox{ or }2\cdot 3^k,
k\ge1;\\0&\mbox{if not,}\end{cases}
\]
and therefore~$\monoid_T(2n+1)$ is the number of partitions
of~$6n+3$ into powers of~$3$ for~$n\ge0$,
and~$\monoid_T(2n)=\monoid_T(2n+1)$ for~$n\ge1$.
\end{example}

\begin{example}\label{example[7]}
An example of a map that is not hyperbolic but still has
exponentially many periodic orbits is given by the simplest
non-trivial $S$-integer map dual to~$x\mapsto2x$ on~$\mathbb
Z[1/3]$. By~\cite{MR1461206} this
has~$\fix_T(n)=(2^n-1)\vert2^n-1\vert_3$, and a calculation
shows that~$\orbit_T$ is~\seqnum{A060480}, and
thus~$\monoid_T=(1,1,3,4,10,13,33,56,10,\dots)$.\end{example}



\providecommand{\bysame}{\leavevmode\hbox to3em{\hrulefill}\thinspace}

\bigskip
\hrule
\bigskip

\noindent 2000 {\it Mathematics Subject Classification}:
Primary 11B83.

\noindent \emph{Keywords: }
Closed orbits; iteration; functorial properties; Dirichlet series

\bigskip
\hrule
\bigskip

\noindent Concerned with sequences:\\\seqnum{A000032},
\seqnum{A000041}, \seqnum{A000045}, \seqnum{A000244},
\seqnum{A001047}, \seqnum{A006206}, \seqnum{A018819},
\seqnum{A027377}, \seqnum{A027381}, \seqnum{A035109},
\seqnum{A036987}, \seqnum{A038712}, \seqnum{A060480},
\seqnum{A060648}, \seqnum{A065333}, \seqnum{A091574}.

\bigskip
\hrule
\bigskip

\end{document}